\newtheorem{thm}{Theorem}
\newtheorem{lem}[thm]{Lemma}
\newtheorem{prp}[thm]{Proposition}
\theoremstyle{definition}
\newtheorem{df}[thm]{Definition}
\newtheorem{exa}[thm]{Example}
\newtheorem{cor}[thm]{Corollary}
\theoremstyle{remark}
\newtheorem*{rem}{Remark}
\numberwithin{equation}{section}
\numberwithin{thm}{section}
\DeclareMathOperator{\map}{map}
\DeclareMathOperator{\Ch}{Ch}
\DeclareMathOperator{\Conf}{Conf}
\DeclareMathOperator{\Crit}{Crit}
\DeclareMathOperator{\Reg}{Reg}
\DeclareMathOperator{\Br}{Br}
\DeclareMathOperator{\CrSeq}{CrSeq}
\DeclareMathOperator{\CrCell}{CrCell}
\DeclareMathOperator{\dir}{dir}
\DeclareMathOperator{\Rt}{Rt}
\DeclareMathOperator{\Cell}{Cell}
\def\vP{\vec{P}}
\def\vR{\vec{\R}}
\def\R{\mathbb{R}}
\def\Z{\mathbb{Z}}
\def\cP{\mathcal{P}}
\def\cV{\mathcal{V}}
\def\cW{\mathcal{W}}
\def\cY{\mathcal{Y}}
\def\cR{\mathcal{R}}
\def\cS{\mathcal{S}}
\def\bO{\mathbf{0}}
\def\bI{\mathbf{1}}
\def\bx{\mathbf{x}}
\def\ba{\mathbf{a}}
\def\bb{\mathbf{b}}
\def\bk{\mathbf{k}}
\def\bl{\mathbf{l}}
\def\bc{\mathbf{c}}
\def\be{\mathbf{e}}
\def\vI{\vec{I}}
\def\cupdot{\mathop{\dot\cup}}
\def\Am{{A'}}
\title{Directed path spaces via discrete vector fields}
\author{Krzysztof Ziemia\'nski}
\thanks{Faculty of Mathematics, Informatics and Mechanics, University of Warsaw, Banacha 2, 02--097 Warszawa, Poland. E-mail: ziemians@mimuw.edu.pl.}
\begin{document}

\begin{abstract}
	Let $K$ be an arbitrary semi-cubical set that can be embedded in a standard cube. Using Discrete Morse Theory, we construct a CW-complex that is homotopy equivalent to the space $\vP(K)_v^w$ of directed paths between two given vertices $v,w$ of $K$. In many cases,  this construction is minimal: the cells of the constructed CW-complex are in 1--1 correspondence with the generators of the homology of $\vP(K)_v^w$. 
\end{abstract}

\maketitle

\section{Introduction}

	The spaces of directed paths on semi-cubical sets play an important role in Theoretical Computer Science \cite{FGR}, \cite{FGHMR}. In the previous paper \cite{ZPerm} the author constructed, for every bi-pointed semi-cubical set $(K,v,w)$ satisfying certain mild assumptions, a regular CW-complex $W(K)_v^w$ that is homotopy equivalent to the space of directed paths $\vP(K)_v^w$ on $K$ from $v$ to $w$. This construction is functorial, and even minimal amongst functorial constructions. The main goal of this paper is to provide a further reduction of this model.

We restrict our attention to semi-cubical sets that can be embedded into a standard cube, regarded as a semi-cubical complex. This special case is general enough to encompass most of interesting  examples appearing in Concurrency. The main result of this paper is a construction of a discrete gradient field \cite{Forman} $\cW_K$ on $W(K)_v^w$. This shows that $\cP(K)_v^w$ is homotopy equivalent to an even smaller CW-complex $X(K)$ whose cells correspond to the critical cells of $\cW_K$. Furthermore, explicit formulas describing the set of critical cells of $\cW_K$ are provided.

This construction allows to calculate the homology groups of $\cP(K)_v^w$, since the differentials in the cellular homology chain complex of $X(K)$ can be recovered using methods from \cite[Chapter 11]{Kozlov}. We do not examine these differentials in detail. It appears that in many important cases it is not necessary since the differentials vanish by dimensional reasons. This way we reprove here the result of Bjorner and Welker \cite{BW}, who calculate the homology of "not $(k+1)$--equal" configuration spaces on the real line, as well as its generalization due to Meshulam and Raussen \cite{MR}. 

We pay a special attention to the case when $K$ is a Euclidean cubical complex, i.e., a sum of cubes having integral coordinates in the directed Euclidean space $\vR^n$. Since state spaces of PV-programs \cite{D} are Euclidean cubical complexes, this case seems important for potential applications in Concurrency. Since every finite Euclidean cubical complex can be embedded into a standard cube, our results apply in this case; also, a description of critical cells of $\cW_K$ is given in this context.

\section{Preliminaries}

Let us recall some definitions and  results obtained in \cite{ZPerm}.

\emph{A d-space} \cite{Gr} is a pair $(X,\vP(X))$, where $X$ is a topological space and $\vP(X)\subseteq P(X)=\map([0,1],X)$ is a family of paths that contains all constant paths and is closed with respect to concatenation and non-decreasing reparametrizations. Paths that belong to $\vP(X)$ will be called \emph{directed paths} or \emph{d-paths}. For $x,y\in X$, $\vP(X)_x^y$ denotes the space of d-paths starting at $x$ and ending at $y$. Prominent examples of d-spaces are \emph{the directed $n$-cube $\vI^n=(I^n,\vP(\vI^n))$} and \emph{the directed Euclidean space} $\vR^n=(\R^n,\vP(\vR^n))$, where $\vP(\vI^n)$ and $\vP(\vR^n)$ are the spaces of all paths having non-decreasing coordinates.

\emph{A semi-cubical set} $K$ is a sequence of disjoint sets $(K[n])_{n\geq 0}$, equipped with \emph{face maps} $d^\varepsilon_i:K[n]\to K[n-1]$, where $n\geq 0$, $i\in\{1,\dots,n\}$ and $\varepsilon\in\{0,1\}$, that satisfy pre-cubical relations, i.e., $d_i^\varepsilon d_j^\eta = d_{j-1}^\eta d_i^\varepsilon$ for $i<j$. Elements of $K[n]$ will be called \emph{cubes} or \emph{$n$--cubes} if one needs to emphasize their dimension; $0$--cubes and $1$--cubes will be called \emph{vertices} and \emph{edges}, respectively. 
The set of all cubes of a semi-cubical set $K$ will be denoted by $\Cell(K)$ or by $K$ if it does not lead to confusion. It is partially ordered by inclusion, i.e. $c\subseteq c'$ if $c$ is the image of $c'$ under some composition of face maps.
Every cube $c\in K[n]$ has \emph{the initial vertex} $d^0(c)=d^0_1\dots d^0_1(c)$ and \emph{the final vertex} $d^1(c)=d^1_1\dots d^1_1(c)$, where $n$ face maps appear in both compositions.

\emph{The geometric realization} of a semi-cubical set $K$ is a d-space
\begin{equation}\label{e:GeometricRealization}
	|K|=\coprod_{n\geq 0} K[n]\times \vec{I}^n/(d^\varepsilon_i(c),x)\sim (c,\delta^\varepsilon_i(x)),
\end{equation}
where $\delta^\varepsilon_i(s_1,\dots,s_{n-1})=(s_1,\dots,s_{i-1},\varepsilon, s_i,\dots,s_{n-1})$. A path $\alpha\in P(|K|)$ is directed if there exist numbers $0=t_0<t_1<\dots<t_l=1$, cubes $c_i\in K[n_i]$ and directed paths $\beta_i$ in $\vec{I}^{n_i}$ such that $\alpha(t)=(c_i,\beta_i(t))$ for $t\in [t_{i-1},t_i]$.

For a semi-cubical set $K$ and a pair of its vertices $v,w\in K[0]$, \emph{a cube chain in $K$ from $v$ to $w$ in $K$} is a sequence of cubes $\bc=(c_1,\dots,c_l)$, $c_i\in K[n_i]$, $n_i>0$, that satisfies the following conditions:
\begin{itemize}
\item{$d^0(c_1)=v$,}
\item{$d^1(c_l)=w$,}
\item{$d^1(c_i)=d^0(c_{i+1})$ for $i\in \{1,\dots,l-1\}$.}
\end{itemize}
The set of all cube chains in $K$ from $v$ to $w$ is denoted by $\Ch(K)_v^w$. There is a natural partial order on $\Ch(K)_v^w$ given by the refinement of cube chains, see \cite[Definition 1.1]{ZPerm} for details.

Assume that a semi-cubical set $K$ is \emph{proper}, i.e., if $c\neq c'$ are cubes of $K$, then $\{d^0(c),d^1(c)\}\neq \{d^0(c'),d^1(c')\}$. Under this assumption, the following holds:

\begin{thm}[{\cite[Theorems 1.2 and 1.3]{ZPerm}}]
	Let $v,w\in K[0]$ be vertices of $K$.
	\begin{enumerate}[\normalfont (a)]
	\item{There is a homotopy equivalence
	\[\vP(|K|)_v^w\simeq |\Ch(K)_v^w|,\]
	where $|\Ch(K)_v^w|$ denotes the geometric realization of the nerve of $\Ch(K)_v^w$.
	}
	\item{$|\Ch(K)_v^w|$ carries a natural structure of a regular CW-complex with closed cells having the form $|\Ch_{\leq \bc}(K)|$ for $\bc\in \Ch(K)_v^w$, where $\Ch_{\leq \bc}(K)\subseteq \Ch(K)_v^w$ is a subposet of cube chains that are finer than $\bc$.\qed}
	\end{enumerate}
\end{thm}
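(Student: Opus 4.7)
The plan for (a) is to identify $\vP(|K|)_v^w$ with the homotopy colimit of a diagram of contractible spaces indexed by $\Ch(K)_v^w$, and then invoke the fact that the homotopy colimit of a functor with contractible values over a poset $\cP$ is the classifying space $|\cP|$. For (b), the cells $|\Ch_{\leq \bc}(K)|$ will be identified cube-by-cube with products of permutohedra.

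First, for each cube chain $\bc = (c_1, \ldots, c_l)$ with $c_i \in K[n_i]$, I would define
\[
\vP(|K|;\bc)_v^w = \{\alpha \in \vP(|K|)_v^w \mid \exists\, 0 = t_0 \leq \cdots \leq t_l = 1,\; \alpha([t_{i-1},t_i]) \subseteq |c_i|\}.
\]
Concatenation identifies this with $\prod_{i=1}^l \vP(\vI^{n_i})_{d^0(c_i)}^{d^1(c_i)}$, a product of convex, hence contractible, spaces. If $\bc''$ refines $\bc$, every $\bc''$-partition of a d-path coarsens to a $\bc$-partition, giving an inclusion $\vP(|K|;\bc'')_v^w \hookrightarrow \vP(|K|;\bc)_v^w$. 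Hence $\bc \mapsto \vP(|K|;\bc)_v^w$ is a functor from $\Ch(K)_v^w$ to $\Sp$ taking values in contractible spaces. Properness of $K$ enters here: it guarantees that the cube containing a given segment is uniquely recovered from the segment's endpoints, so the functor is unambiguously defined.

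Every d-path in $|K|$ admits a cube chain partition by the definition in \eqref{e:GeometricRealization}, so the obvious map
\[
\colim_{\bc \in \Ch(K)_v^w} \vP(|K|;\bc)_v^w \longrightarrow \vP(|K|)_v^w
\]
is surjective, and compatibility of partitions ensures that it is a bijection onto the correct point-set colimit. To upgrade this to a homotopy equivalence one must show the colimit is in fact a homotopy colimit; this can be done by verifying that the inclusions in the diagram are cofibrations with sufficiently transparent combinatorics, or by replacing each $\vP(|K|;\bc)_v^w$ with a cofibrant model of tame paths in which each cube is traversed on a disjoint sub-interval of $[0,1]$. Then the standard result $\hocolim_\cP F \simeq |\cP|$ for functors $F$ with contractible values delivers (a). I expect the cofibrancy verification to be the main technical obstacle.

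For (b), each $\Ch_{\leq \bc}(K)$ has $\bc$ as a maximum element, so $|\Ch_{\leq \bc}(K)|$ is a cone, hence contractible. The stronger assertion to prove is that $|\Ch_{\leq \bc}(K)|$ is a closed PL-ball of dimension $\sum_{i=1}^l(n_i - 1)$ whose boundary sphere is the realization of the sub-poset of proper refinements of $\bc$. Cube by cube this reduces to identifying the refinement poset of a single $n$-cube with the face poset of the $(n-1)$-dimensional permutohedron, a known combinatorial fact; the refinement poset of $\bc$ is then a product of permutohedra, itself a convex polytope and therefore a PL-ball. Assembling these cells along the poset order on $\Ch(K)_v^w$ yields the desired regular CW structure, with the attaching maps automatic from the poset inclusions. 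The combinatorial identification with a product of permutohedra is the technical core of (b).
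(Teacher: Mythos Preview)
The paper does not prove this theorem: it is quoted verbatim from \cite{ZPerm} and closed with a \qed immediately after the statement, so there is no proof in the present paper to compare against. Your outline is a reasonable sketch of the strategy actually carried out in \cite{ZPerm}; the covering of $\vP(|K|)_v^w$ by contractible pieces $\vP(|K|;\bc)_v^w$, the passage through a homotopy colimit, and the identification of $\Ch_{\leq\bc}(K)$ with a product of permutohedra are precisely the ingredients used there.

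Two small technical remarks on your sketch, for accuracy. First, the identification of $\vP(|K|;\bc)_v^w$ with $\prod_i \vP(\vI^{n_i})_\bO^\bI$ is not literal, since your definition allows the partition points $t_i$ to vary; one either restricts to ``natural'' paths (those parametrized by total length, so that the $t_i$ are determined by $\bc$) or shows the projection forgetting the $t_i$ is a homotopy equivalence. Second, the step from colimit to homotopy colimit is, as you anticipate, the genuine work: in \cite{ZPerm} this is handled not by checking cofibrancy of the inclusions directly but by passing to a tame-path model and exhibiting an explicit deformation, which is where properness is actually used.
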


In this paper we restrict to the case when $K$ is a semi-cubical subset of a standard cube. \emph{The standard $n$-cube $\square^n$} is a semi-cubical set whose $k$--cubes $\square^n[k]$ are sequences $(e_1,\dots,e_n)$, $e_i\in \{0,1,*\}$ having exactly $k$ entries equal to $*$. A face map $d_i^\varepsilon$ converts the $i$--th occurrence of $*$ into $\varepsilon$. It is easy to see that the geometric realization of $\square^n$ is d-homeomorphic to the directed cube $\vI^n$. Furthermore, every semi-cubical subset of $\square^n$ is proper, so the results of \cite{ZPerm} can be applied in this situation.


The majority of proofs in this paper is inductive with respect to the dimension of the ambient cube $\square^n$. Thus, for convenience, the coordinates will be indexed by an arbitrary finite ordered set $A$ rather than by $\{1,\dots,n\}$. In the case when $A$ is non-empty, $m\in A$ denotes its maximal element and $A'=A\setminus\{m\}$.

Let $\#X$ denote the cardinality of a finite set $X$.
\begin{df}
	\emph{The standard $A$--cube} $\square^A$ is a semi-cubical set such that
	\begin{itemize}
		\item{$\square^A[k]$ is the set of all functions $c:A\to \{0,1,*\}$ such that $\#(c^{-1}(*))=k$.}
		\item{For $c\in \square^A[k]$, if $c^{-1}(*)=\{b_1<b_2<\dots<b_k\}$, then
		\[
			d^\varepsilon_i(c)(a)=\begin{cases}
				\varepsilon & \text{if $a=b_i$}		\\		
				c(a) & \text{otherwise.}
			\end{cases}
		\] 
		}
	\end{itemize}
	\emph{An $A$--cubical complex} is a semi-cubical subset of $\square^A$.
\end{df}
We identify $|\square^A|$ with the directed $A$--cube $\vI^A$; thus, the geometric realization of an $A$--cubical complex is a subspace of $\vI^A$.

Let us introduce a notation for cubes of $\square^A$. For subsets $B_1,B_*,B_0\subseteq A$ such that $A=B_1\cupdot B_* \cupdot B_0$, let $c(B_1,B_*,B_0)$ be a cube of $\square^A$ such that
\begin{equation}
	c(B_1,B_*,B_0)(a)=\begin{cases}
		0 & \text{for $a\in B_0$}\\
		* & \text{for $a\in B_*$}\\
		1 & \text{for $a\in B_1$.}\\
	\end{cases}
\end{equation}
The dimension of $c(B_1,B_*,B_0)$ equals $\#B_*$, and $c(B_0,B_*,B_1)\subseteq c(B'_0,B'_*,B'_1)$ if and only if $B'_0\subseteq B_0$, $B'_1\subseteq B_1$ and $B_*\subseteq B'_*$.

For a subset $B\subseteq A$ and $\varepsilon\in\{0,1\}$, let $K|_B^\varepsilon\subseteq \square^B$ be the set of functions $c:B\to \{0,1,*\}$ such that the function
\begin{equation}
		A\ni a \mapsto
		\begin{cases}
			c(a) & \text{for $a\in B$}\\
			\varepsilon & \text{for $a\not\in B$}
		\end{cases}
\end{equation}
belongs to $K$. Clearly, $K|_B^\varepsilon$ is a $B$--cubical complex. After passing to geometric realizations, the restriction of $K$ corresponds to the intersection with the suitable face of the directed $A$--cube, i.e., there is a homeomorphism
\begin{equation}
	|K|^\varepsilon_B|\cong |K| \cap \{(t_a)_{a\in A}:\; \forall_{a\in A\setminus B}\; t_a=\varepsilon\}.
\end{equation}

\section{Ordered partitions and cube chains in $A$--complexes}

\begin{df}
\emph{An ordered partition} of a set $A$ is a sequence $\lambda=B_1|B_2|\dots|B_{l(\lambda)}$ of non-empty disjoint subsets of $A$ such that $A=\bigcup_{i=1}^l B_i$. We say that an ordered partition $\mu=C_1|\dots|C_{l(\mu)}$ is \emph{finer} than $\lambda$ if there exists a sequence of integers 
\[
	0=r(0)<r(1)<r(2)<\dots<r(l(\lambda))=l(\mu)
\]
such that $B_i=C_{r(i-1)+1}\cup C_{r(i-1)+2} \cup \dots\cup C_{r(i)}$ for all $i\in\{1,\dots,l(\lambda)\}$. Let $\cP_A$ be the poset of all ordered partitions of $A$, with a partial order such that $\mu \leq \lambda$ if and only if $\mu$ is finer than $\lambda$.
\end{df}

We will define an isomorphism between $\cP_A$ and the poset $\Ch(\square^A)_{\bO}^\bI$, where $\bO,\bI\in \square^A[0]$ stand for the constant functions having values $0$ and $1$ respectively.
Pick a cube chain $\bc=(c_1,\dots,c_l)\in\Ch(\square^A)_\bO^\bI$, and let $B^\bc_i=c_i^{-1}(*)$.
For every cube $c\in \square^A$ and $a\in A$ we have
\[
	d^\varepsilon(c)(a)=\begin{cases}
		c(a) & \text{for $c(a)\neq *$}\\
		\varepsilon & \text{for $c(a)=*$.}
	\end{cases}
\] 
Thus, from the condition $d^1(c_i)=d^0(c_{i+1})$ follows that
\begin{itemize}
	\item{$c_i(a)=0$ implies $c_{i-1}(a)=0$,}
	\item{$c_i(a)=1$ implies $c_{i+1}(a)=1$,}
	\item{$c_i(a)=*$ implies $c_{i-1}(a)=0$ and $c_{i+1}(a)=1$.}
\end{itemize}
Moreover,
\begin{itemize}
\item{$c_1(a)\neq 1$ for all $a\in A$, since $d^0(c_1)=\bO$,}
\item{$c_l(a)\neq 0$ for all $a\in A$, since $d^1(c_l)=\bI$.}
\end{itemize}
Thus, for every $a\in A$, the value $*$ appears exactly once in the sequence $\left(c_i(a)\right)_{i=1}^l$; all preceding elements are $0$ and all succeeding ones are $1$.
As a consequence, $\bc$ determines an ordered partition $\lambda^\bc:=B^\bc_1|B^\bc_2| \dots| B^\bc_l$ of $A$.

On the other hand, an ordered partition $\lambda=B_1|\dots| B_l$ determines a cube chain $\bc^\lambda=(c^\lambda_1,\dots,c^\lambda_l)$, where
\begin{equation}
	c^\lambda_i = c(B_1\cup\dots\cup B_{i-1},B_i,B_{i+1}\cup\dots\cup B_l).
\end{equation}

It is easy to check that these operations are mutually inverse and that finer cube chains correspond to finer partitions. As a consequence, we obtain

\begin{prp}\label{p:PChIso}
	The map
	\[
		\Ch(\square^A)_\bO^\bI\ni \bc \mapsto B_1^\bc|\dots|B^\bc_l\in \cP_A
	\]
	is an isomorphism of posets.
\end{prp}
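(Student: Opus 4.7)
The statement essentially follows from unpacking definitions; the excerpt already contains the core observation, so my plan is to organize the verification into four concise steps.

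First, I would confirm that the map $\bc\mapsto \lambda^\bc$ is well-defined, i.e.\ that $\lambda^\bc = B_1^\bc|\dots|B_l^\bc$ is indeed an ordered partition. The bulleted implications derived in the excerpt show that, for each coordinate $a\in A$, the sequence $(c_i(a))_{i=1}^l$ consists of a (possibly empty) prefix of $0$'s, exactly one $*$, and a (possibly empty) suffix of $1$'s. Hence every $a$ lies in exactly one $B_i^\bc$, so the $B_i^\bc$ are pairwise disjoint and cover $A$. Non-emptiness of each $B_i^\bc$ is built into the definition of a cube chain, since cubes in a cube chain have positive dimension and $\dim c_i = \#B_i^\bc$.

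Second, I would verify that the proposed inverse $\lambda\mapsto \bc^\lambda$ lands in $\Ch(\square^A)_\bO^\bI$. This is a direct computation from the formula $c_i^\lambda = c(B_1\cup\dots\cup B_{i-1},B_i,B_{i+1}\cup\dots\cup B_l)$: applying $d^0$ to $c_1^\lambda$ turns the single $*$-block $B_1$ into $0$'s, giving $\bO$; applying $d^1$ to $c_l^\lambda$ turns $B_l$ into $1$'s, giving $\bI$; and for intermediate indices both $d^1(c_i^\lambda)$ and $d^0(c_{i+1}^\lambda)$ equal the vertex that is $1$ on $B_1\cup\dots\cup B_i$ and $0$ on $B_{i+1}\cup\dots\cup B_l$.

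Third, I would check the two composites are identities. For $\lambda\mapsto\bc^\lambda\mapsto \lambda^{\bc^\lambda}$, it is immediate that $(c_i^\lambda)^{-1}(*)=B_i$. For $\bc\mapsto\lambda^\bc\mapsto \bc^{\lambda^\bc}$, the prefix-suffix structure recalled in the first step says precisely that if $a\in B_j^\bc$ then $c_i(a)=0,*,1$ for $i<j, i=j, i>j$ respectively; comparing with the defining formula for $c_i^{\lambda^\bc}$ yields $c_i^{\lambda^\bc}=c_i$.

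Finally, I would match the partial orders. Refinement of cube chains (via \cite[Definition 1.1]{ZPerm}) is parametrized by a sequence $0=r(0)<r(1)<\dots<r(l(\bc))=l(\bc')$ such that each $c_i$ is a concatenation of $c'_{r(i-1)+1},\dots,c'_{r(i)}$; under the bijection, "$c_i$ is this concatenation" translates coordinate-by-coordinate into "$B_i^\bc = B_{r(i-1)+1}^{\bc'}\cup\dots\cup B_{r(i)}^{\bc'}$", which is exactly the refinement relation on $\cP_A$. The only step requiring a small amount of care is this last order comparison, since it relies on the definition of refinement in \cite{ZPerm} rather than on material in the excerpt; but once the coordinatewise description is in hand it is a direct translation and presents no real obstacle.
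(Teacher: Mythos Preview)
Your proposal is correct and follows essentially the same approach as the paper: the paper carries out your first two steps in the discussion preceding the proposition, then simply asserts that ``these operations are mutually inverse and that finer cube chains correspond to finer partitions'' before citing \cite[Proposition 8.1]{ZPerm}. You have merely spelled out in detail what the paper declares easy to check.
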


This is a slight reformulation of \cite[Proposition 8.1]{ZPerm}.

Now let $K$ be an $A$--cubical complex. Define
\begin{equation}\label{e:PKDef}
	\cP_K:=\{\lambda\in \cP_A:\; \bc^\lambda\in \Ch(K)_\bO^\bI\}=\{\lambda\in \cP_A:\; \forall_{i\in\{1,\dots,l(\lambda)\}}\; c^\lambda_i\in K\}.
\end{equation}
This is the image of $\Ch(K)_\bO^\bI$ under the isomorphism in Proposition \ref{p:PChIso}. As a consequence, there is a sequence of homotopy equivalences
\begin{equation}
	\vP(|K|)_\bO^\bI \simeq |\Ch(K)_\bO^\bI|\simeq |\cP_K|. 
\end{equation}

The following criterion will be used later; it follows immediately from the definitions.
\begin{prp}\label{p:CompositionLemma}
	Assume that $A=C\cupdot B_1 \cupdot \dots\cupdot B_k \cupdot D$ is a partition of $A$ and $\lambda\in\cP_C$, $\mu\in \cP_D$. Then the following conditions are equivalent:
	\begin{enumerate}[\normalfont (a)]
		\item{$\lambda|B_1|\dots|B_k|\mu\in \cP_K$.}
		\item{$\lambda\in \cP(K|^0_C)$, $\mu\in \cP(K|^1_D)$ and
		\[
			c(C\cup B_1\cup\dots\cup B_{i-1},B_i,B_{i+1}\cup \dots\cup B_k\cup D)\in K
		\]
		 for every $i\in \{1,\dots,k\}$.\qed}
	\end{enumerate}	
\end{prp}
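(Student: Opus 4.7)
The statement is essentially an unfolding of definitions, so my plan is to spell out what $\bc^\nu$ looks like for the concatenated partition $\nu = \lambda | B_1 | \dots | B_k | \mu$ and observe that the membership condition $c^\nu_i \in K$ breaks cleanly into three groups corresponding to the three assertions in (b).

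Write $\lambda = C_1 | \dots | C_p$ and $\mu = D_1 | \dots | D_q$, so that $\nu$ has $p + k + q$ blocks. By definition of $\bc^\nu$, for each index $i$ the cube $c^\nu_i$ is $1$ on the union of blocks strictly before position $i$, $*$ on the $i$-th block, and $0$ on the union of blocks strictly after. I would then read this off in the three ranges:
\begin{itemize}
\item For $i \in \{1, \dots, p\}$, one has $c^\nu_i = c(C_1 \cup \dots \cup C_{i-1},\ C_i,\ C_{i+1} \cup \dots \cup C_p \cup B_1 \cup \dots \cup B_k \cup D)$. Restricted to $C$, this is precisely $c^\lambda_i$, and on $A \setminus C$ the values are all $0$. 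Hence $c^\nu_i \in K$ is equivalent to $c^\lambda_i \in K|^0_C$.
\item For $i = p + j$ with $j \in \{1,\dots,k\}$, the blocks before position $i$ union to $C \cup B_1 \cup \dots \cup B_{j-1}$ and the blocks after union to $B_{j+1} \cup \dots \cup B_k \cup D$, giving exactly the cube $c(C \cup B_1 \cup \dots \cup B_{j-1},\ B_j,\ B_{j+1} \cup \dots \cup B_k \cup D)$ appearing in (b).
\item For $i = p + k + j$ with $j \in \{1,\dots,q\}$, a symmetric computation shows $c^\nu_{p+k+j}$ restricted to $D$ is $c^\mu_j$, with constant value $1$ on $A \setminus D$, so $c^\nu_{p+k+j} \in K$ is equivalent to $c^\mu_j \in K|^1_D$.
\end{itemize}

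By the characterisation of $\cP_K$ in \eqref{e:PKDef}, condition (a) asserts that $c^\nu_i \in K$ for every $i$. Splitting $i$ into the three ranges above and applying the three equivalences gives exactly the three conjuncts of (b); conversely, the three conjuncts of (b) ensure $c^\nu_i \in K$ for all $i$ and hence (a). There is no real obstacle here — the only point requiring a little care is keeping track of which side of the $B_j$'s the blocks of $\lambda$ and $\mu$ lie on, so that the $0$/$1$ pattern on $A \setminus C$ and $A \setminus D$ matches the $\varepsilon$ in $K|^\varepsilon_B$.
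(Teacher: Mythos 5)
Your proof is correct and is exactly the definitional unfolding the paper has in mind: the paper gives no argument at all beyond ``it follows immediately from the definitions,'' and your case split on the three ranges of indices, together with the observation that extension by $0$ on $A\setminus C$ (resp.\ by $1$ on $A\setminus D$) matches the definition of $K|^0_C$ (resp.\ $K|^1_D$), is precisely what that remark elides. No issues.
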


In the remaining part of the paper we will examine the poset $\cP_K$ with means of Discrete Morse Theory.


\section{Discrete Morse theory for CW-posets}

In this Section we recall some basic facts from Discrete Morse Theory for regular CW-complexes. For detailed expositions of this topic see, for example, \cite{Forman} \cite{FormanUsers}, \cite[Chapter 11]{Kozlov}.

\begin{df}[\cite{Bj}]
	A poset $P$ is \emph{a CW-poset} if, for every $a\in P$, $|P_{<a}|$ is homeomorphic to a sphere $S^{d(a)-1}$. Elements of a CW-poset will be called cells and the integer $d(a)$ will be called \emph{the dimension} of a cell $a$. If $a<b\in P$ and $d(a)=d(b)-1$, then $a$ will be called \emph{a facet of $b$} and we will write $a\prec b$.
\end{df}

For a CW-poset $P$, $|P|$ has a natural CW-structure; its closed $k$--cells have the form $|P_{\leq a}|$, for $a\in P$ having dimension $k$. 
The cell poset $C(W)$ of a regular CW-complex $W$ is a CW-poset and $|C(W)|$ is homeomorphic to $W$ by a cell-preserving homeomorphism. 
In particular, the face poset of a convex polytope is a CW-poset.

\begin{df}\label{d:DVF}
	Let $P$ be a subposet of a finite CW-poset.
\begin{itemize}
	\item{ \emph{A discrete vector field $\cV$ on $P$} is a set of pairwise disjoint pairs (called \emph{vectors}) $(a,b)$, $a,b\in P$ such that $a$ is a facet of $b$.}
	\item{\emph{A flow} of $\cV$ is a sequence of cells of $P$
	\begin{equation}
		(a_1,b_1,a_2,b_2,\dots,a_{k},b_k,a_{k+1})\label{e:Flow}
	\end{equation}
	such that, for all $i\in\{1,\dots,k\}$,
\begin{itemize}
	\item{$(a_i,b_i)\in\cV$,}
	\item{$(a_{i+1},b_i)\not\in\cV$}
	\item{$a_{i+1}\prec b_i$.}
\end{itemize}
	}
	\item{\emph{A cycle} of $\cV$ is a flow (\ref{e:Flow}) such that $a_1=a_{k+1}$, $k>0$.}
	\item{A discrete vector field $\cV$ is \emph{a gradient field} if it admits no cycle.}
\end{itemize}	
\end{df}	 

For a discrete vector field $\cV$ on a CW-poset $P$, let
\begin{equation}
	\Reg(\cV)=\bigcup_{(a,b)\in\cV} \{a,b\}
\end{equation}
be the set of \emph{regular cells} of $\cV$, and let $\Crit(\cV)=P\setminus\Reg(\cV)$ be the set of \emph{critical cells} of $\cV$. 
If $P\subseteq Q$ and $Q$ is a CW-poset, then $\cV$ can be regarded as a discrete vector field on $Q$. In such case, we will write $\Crit_{P}(\cV)$ or $\Crit_Q(\cV)$ for the set of critical cells to emphasize which underlying poset we have in mind.

The importance of gradient fields follows from the following theorem:

\begin{thm}[{\cite[Theorem 11.13]{Kozlov}}]\label{t:Forman}
	Assume that $P$ is a CW-poset and $\cV$ is a gradient field on $P$. Then there exists a CW-complex $W(\cV)$ that is homotopy equivalent to $|P|$, whose $d$--dimensional cells are in 1-1 correspondence with $d$--dimensional critical cells of $\cV$.
\end{thm}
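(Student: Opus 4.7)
My plan is to prove this by induction on $\#\cV$, the number of vectors in the gradient field. The base case $\cV=\emptyset$ is immediate: every cell is critical, so taking $W(\cV)=|P|$ works. For the inductive step, the idea is to exhibit, given $\cV\ne\emptyset$, a subposet $P'\subseteq P$ obtained by removing one matched pair $(a,b)\in\cV$, such that $|P'|$ is homotopy equivalent to $|P|$ via an elementary collapse, $P'$ is again a CW-poset, and $\cV':=\cV\setminus\{(a,b)\}$ is still a gradient field on $P'$ with $\Crit_{P'}(\cV')=\Crit_P(\cV)$.

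First I would use acyclicity to select the pair $(a,b)$. Define a binary relation on $\cV$ by $(a,b)\to(a',b')$ whenever $a'\prec b$ and $a'\ne a$; a cycle in this relation would produce exactly a cycle of $\cV$ in the sense of Definition~\ref{d:DVF}, so the no-cycle hypothesis guarantees the existence of a sink $(a,b)$. Sinkness means that the only vector of $\cV$ involving any facet of $b$ is $(a,b)$ itself, and in particular $a$ is not the lower cell of any other vector.

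Next I would verify that $(a,b)$ is a \emph{free pair} in $|P|$, i.e.\ that $b$ is not a face of any other cell and $a$ is a facet only of $b$. The first property holds because, by regularity, $b$ would have to be matched to a higher cell to have a cofacet in $\cV$, contradicting that $(a,b)\in\cV$ and vectors are pairwise disjoint; any cofacet of $b$ would then create either a cycle or a violation of the sink property. The second property follows from the sink condition by an analogous argument. A free pair in a regular CW-complex admits an elementary collapse: $|P|$ deformation-retracts onto $|P'|$ where $P'=P\setminus\{a,b\}$, and this deformation retraction preserves the regular CW-structure on the remaining cells, so $P'$ is itself a CW-poset. Since $\cV'$ inherits acyclicity from $\cV$ and has one fewer vector, the induction closes.

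The main obstacle is showing that the inductively selected pair really is free after previous collapses have been performed, which amounts to checking that the sink condition on $\cV$ in $P$ continues to hold for $\cV'$ in $P'$ and that regularity is preserved by each collapse. A cleaner packaging, which is the route taken in \cite[Chapter 11]{Kozlov}, is to fix once and for all a linear extension of the acyclic relation on $\cV$, perform the collapses in that order, and then separately compute the attaching maps of the resulting cells of $W(\cV)$ by enumerating flows between critical cells; for the present statement, which only concerns the existence of $W(\cV)$ and the cell count, the inductive collapse argument suffices.
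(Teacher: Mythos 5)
There is a genuine gap here, and in fact the overall strategy cannot be repaired; note also that the paper itself offers no proof of this statement but simply cites \cite[Theorem 11.13]{Kozlov}, so your attempt has to stand on its own. The first problem is the claim that a sink $(a,b)$ of your relation is a free pair. Sinkness only says that no facet of $b$ other than $a$ is the \emph{lower} cell of a vector; it says nothing about the cofacets of $b$ or about the other cofacets of $a$, which is exactly what freeness requires. Concretely, let $P$ be the face poset of a $2$--simplex with vertices $v_1,v_2,v_3$, edges $e_1=v_2v_3$, $e_2=v_1v_3$, $e_3=v_1v_2$, top cell $T$, and $\cV=\{(e_1,T),(v_1,e_2),(v_2,e_3)\}$. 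This is acyclic, and $(v_1,e_2)$ is a sink of your relation (the other facet $v_3$ of $e_2$ is critical), yet $e_2$ has the cofacet $T$ and $v_1$ has the second cofacet $e_3$, so $(v_1,e_2)$ is not free and removing it is not an elementary collapse. Your justification (``$b$ would have to be matched to a higher cell to have a cofacet'') conflates \emph{having} a cofacet with \emph{being matched to} one; a cell can have cofacets while being the upper element of a vector, or while being critical.

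The second, more fundamental problem is that $W(\cV)$ is in general \emph{not} a subcomplex of $|P|$, so no ordering of single-pair collapses of the full complex can close your induction: your procedure, if it ran to completion, would terminate at the closed subcomplex spanned by $\Crit(\cV)$. Take $P$ to be the face poset of $\partial\Delta^3$ with the perfect acyclic matching whose critical cells are the vertex $1$ and the triangle $234$ (match each vertex $i\neq 1$ with the edge $1i$ and each edge $ij$, $1<i<j$, with the triangle $1ij$). Every edge of $\partial\Delta^3$ lies in two triangles and every vertex in three edges, so there is no free pair at all at the first step; moreover the subcomplex spanned by the critical cells is the disjoint union of a point and a closed triangle, which is not homotopy equivalent to $S^2$. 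The actual content of the theorem is that the attaching maps of the critical cells get modified: one filters $|P|$ by the level subcomplexes of a discrete Morse function and shows that crossing a regular pair $(a,b)$ is a deformation retraction \emph{because within that level subcomplex the pair is free} (all other cofacets of $a$ occur later in the filtration), while crossing a critical cell attaches a cell whose attaching map is only controlled up to homotopy. Your closing remark that the collapse induction ``suffices'' for the existence statement is therefore not correct; the filtration argument, or an equivalent poset-map/closure argument as in \cite[Chapter 11]{Kozlov}, is genuinely needed.
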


For convenience, we will use a notion of discrete Morse function which is slightly different from the original one.

\begin{df}
	Let $\cV$ be a discrete vector field on a finite CW-poset $P$. \emph{A (discrete) Morse function} associated to $\cV$ is a function $h:P\to H$, where $H$ is an ordered set, such that, for every $a\prec b\in P$,
	\begin{enumerate}[(a)]
	\item{$(a,b)\in\cV$ implies that $h(a)>h(b)$,}
	\item{$(a,b)\not\in \cV$ implies that $h(a)\leq h(b)$.}
	\end{enumerate}
\end{df}

\begin{lem}\label{l:Order}
	Let $\cV$ be a discrete vector field on a finite CW-poset $P$. If there exists a Morse function associated to $\cV$, then $\cV$ is a gradient field.
\end{lem}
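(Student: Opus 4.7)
The plan is a short proof by contradiction. Suppose $\cV$ admits a cycle
\[
(a_1,b_1,a_2,b_2,\dots,a_k,b_k,a_{k+1})
\]
with $a_1 = a_{k+1}$ and $k > 0$, and let $h : P \to H$ be an associated Morse function. I would simply chain the two defining inequalities of $h$ along the cycle.

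Concretely, for each $i \in \{1,\dots,k\}$ the flow condition gives $(a_i,b_i)\in\cV$, and since vectors are pairs $a\prec b$, condition (a) yields $h(a_i) > h(b_i)$. On the other hand, for each $i\in\{1,\dots,k\}$, the flow condition gives $a_{i+1}\prec b_i$ and $(a_{i+1},b_i)\not\in\cV$, so condition (b) yields $h(a_{i+1})\leq h(b_i)$. Concatenating these alternately along the cycle produces
\[
h(a_1) > h(b_1) \geq h(a_2) > h(b_2) \geq \cdots > h(b_k) \geq h(a_{k+1}),
\]
hence $h(a_1) > h(a_{k+1})$. But $a_1 = a_{k+1}$ forces $h(a_1) = h(a_{k+1})$, a contradiction. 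Therefore no cycle exists, and $\cV$ is a gradient field.

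There is essentially no obstacle here; the only thing to be careful about is that the strict inequality from condition (a) survives the chain, which it does as soon as $k > 0$, because at least one strict step $h(a_i) > h(b_i)$ appears. The statement is the discrete analogue of the classical fact that a function strictly decreasing along the flow precludes closed orbits, and the argument above is exactly that observation transported to the combinatorial setting.
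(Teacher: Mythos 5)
Your proof is correct and is essentially identical to the paper's: both chain the strict inequalities $h(a_i)>h(b_i)$ from condition (a) with the weak inequalities $h(a_{i+1})\leq h(b_i)$ from condition (b) along the flow to conclude $h(a_1)>h(a_{k+1})$, so $a_1\neq a_{k+1}$. The only cosmetic difference is that the paper states this directly for any flow while you phrase it as a contradiction with an assumed cycle.
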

\begin{proof}
	If $(a_1,b_1,\dots,a_k,b_{k+1})$ is a flow in $\cV$, then
	\[
		h(a_1)>h(b_1)\geq h(a_2)>\dots\geq h(a_k)>h(a_{k+1});
	\]
	thus, $a_1\neq a_{k+1}$.
\end{proof}

We say that a subposet $Q$ of a poset $P$ is \emph{closed} if, for $x\leq y\in P$, $y\in Q$ implies that $x\in Q$. 

\begin{lem}\label{l:Div}
	Let $P$ be finite CW-poset and let $Q\subseteq P$ is a closed subposet. Let $\cV_Q, \cV_{P\setminus Q}$ be a discrete vector fields on $Q$ and $P\setminus Q$ respectively. Then every cycle of $\cV=\cV_Q\cup \cV_{P\setminus Q}$ is contained either in $Q$ or in $P\setminus Q$. In particular, if both $\cV_Q$ and $\cV_{P\setminus Q}$ are gradient fields, then $\cV$ is also a gradient field.
\end{lem}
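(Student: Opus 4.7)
The plan is to exploit the asymmetry created by $Q$ being \emph{downward} closed: transitions between vectors in a flow occur along facet relations $a_{i+1}\prec b_i$, which can only drag us \emph{into} $Q$, never out of it. Since a cycle returns to its starting point, it cannot cross from $P\setminus Q$ into $Q$.

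More precisely, I would fix a cycle $(a_1,b_1,a_2,\dots,a_k,b_k,a_{k+1})$ of $\cV$ with $a_1=a_{k+1}$ and argue as follows. First observe that every vector $(a_i,b_i)\in\cV$ belongs to exactly one of $\cV_Q$ or $\cV_{P\setminus Q}$, since by construction these two vector fields involve disjoint cells. Next, I claim an ``absorption'' property: if $a_i\in Q$, then both $b_i\in Q$ and $a_{i+1}\in Q$. Indeed, $a_i\in Q$ forces $(a_i,b_i)\in\cV_Q$, hence $b_i\in Q$; then $a_{i+1}\prec b_i$ together with the closedness of $Q$ yields $a_{i+1}\in Q$.

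Iterating this observation, if $a_j\in Q$ for some $j\in\{1,\dots,k\}$, then $a_i\in Q$ and $b_i\in Q$ for all $i\geq j$. Applied to a cycle, this gives the trichotomy we need: either $a_1\in Q$, in which case every $a_i,b_i$ lies in $Q$ and the cycle is contained in $Q$; or $a_1\in P\setminus Q$, in which case no $a_i$ can ever enter $Q$ (otherwise absorption would force $a_{k+1}=a_1\in Q$, a contradiction), so every $a_i$ lies in $P\setminus Q$ and then each $(a_i,b_i)\in\cV_{P\setminus Q}$, placing the whole cycle in $P\setminus Q$.

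The ``in particular'' statement follows at once: a cycle of $\cV$ would, by the above, restrict to a cycle of $\cV_Q$ or of $\cV_{P\setminus Q}$ (the facet relations used are the same as those inherited from the ambient CW-poset), contradicting the hypothesis that both are gradient fields. The argument is essentially bookkeeping around the definition; the only subtlety worth spelling out is that closedness of $Q$ is used precisely in the step $a_{i+1}\prec b_i\in Q\Rightarrow a_{i+1}\in Q$, and there is no analogous statement letting a flow escape from $P\setminus Q$ into $Q$ via a vector of $\cV_Q$, because vectors of $\cV_Q$ only pair cells that already lie in $Q$.
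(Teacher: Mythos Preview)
Your argument is correct and follows essentially the same route as the paper's proof: both rest on the observation that $a_i\in Q\Rightarrow b_i\in Q$ (since vectors of $\cV_Q$ and $\cV_{P\setminus Q}$ involve disjoint cells) and $b_i\in Q\Rightarrow a_{i+1}\in Q$ (since $Q$ is downward closed and $a_{i+1}\prec b_i$), whence a cycle cannot straddle $Q$ and $P\setminus Q$. Your write-up is simply more detailed than the paper's terse version.
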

\begin{proof}
	Let $(a_1,b_1,\dots,a_k,b_k,a_1)$ be a cycle. By the assumptions, $a_i\in Q$ implies $b_i\in Q$, and $b_i\in Q$ implies $a_{i+1}\in Q$, since $a_{i+1}\prec b_i$. Thus, either all elements of the cycle are in $Q$ or none is.
\end{proof}

We will give now some examples of gradient fields. While the first two examples  are not crucial in proving main results of this paper, they can be helpful in understanding similar constructions performed on permutahedra.

In all following examples, $A$ is a non-empty finite ordered set, $m$ is its maximal element and $A'=A\setminus \{m\}$.

\begin{exa}[Simplices]
	\emph{The $A$--simplex} is a poset $\Delta^A$ of non-empty subsets of $A$. The standard vector field on $\Delta^A$ is
	\[
		\cS^\Delta_A=\{(B,B\cup\{m\}):\; \emptyset\neq B\subseteq A'\}.
	\]
	This is a gradient field, since
	\[
		h:\Delta^A\ni B \mapsto \begin{cases}
			0 & \text{if $m\in B$}\\
			1 & \text{if $m \not\in B$}
		\end{cases}
		\in \{0,1\}
	\]
	is a Morse function. 	The only critical cell of $\cS^\Delta_A$ is $\{m\}$.
\end{exa}

\begin{exa}[Cubes]
	Here $\square^A=\Cell(\square^A)$ denotes the poset of cubes of the standard $A$--cube.	
	Define $\square^{A,m}=\{c\in \square^A:\; c(m)=0\}$ and $\square^{A,r}=\square^A\setminus \square^{A,m}$. Clearly, $\square^{A,m}$ is a closed subposet of $\square^A$, which is isomorphic  to $\square^{A'}$.
	We define vector fields $\cS^{\square m}_A$, $\cS^{\square r}_A$ and $\cS^{\square}_A$ on posets $\square^{A,m}$, $\square^{A,r}$ and $\square^A$, respectively, inductively as follows. We put
	\[\cS^{\square m}_\emptyset=\cS^{\square r}_\emptyset=\cS^{\square}_\emptyset=\emptyset,\]
	and for $A\neq\emptyset$ let
	\begin{enumerate}[\normalfont (a)]
	\item{$\cS^{\square m}_A=\{(f,g):\; (f|_\Am,g|_{\Am})\in \cS^\square_{\Am}, \; f(m)=g(m)=0\}.$}
	\item{$\cS^{\square r}_A=\{(f,g):\; f|_{\Am}=g|_{\Am},\; f(m)=1,\; g(m)=*\}$}
	\item{$\cS^{\square}_A=\cS^{\square m}_A\cup \cS^{\square r}_A$.}
	\end{enumerate}
	We will prove inductively that $\cS^\square_A$ is a gradient field. This is obvious for $A=\emptyset$. By the inductive hypothesis, $\cS^{\square m}_A\simeq \cS^\square_{A'}$ is a gradient field on $\square^{A,m}\simeq \square^{A'}$, and $\cS^{\square r}_{A}$ admits a Morse function
	\[
		\square^{A,r}\ni c \mapsto c(m) \in \{*<1\}
	\]
	and hence it is also a gradient field. Now Lemma \ref{l:Div} implies that $\cS^A_\square$ is a gradient field. The only critical cell of $\cS^\square_A$ is a $0$--cell $\bO=(0,\dots,0)$.
\end{exa}

\begin{exa}[Product of discrete vector fields]
	Let $P,Q$ be finite CW-posets, and let $\cV$, $\cW$ be discrete vector fields on $P$ and $Q$, respectively. Define a discrete vector field $\cV\times \cW$ on $P\times Q$ by
	\begin{equation}
		\cV\times \cW=\{((p,q),(p,q')):\; p\in P,\; (q,q')\in \cW\}
		\cup \{((p,q),(p',q)):\; (p,p')\in\cV,\; q\in\Crit(\cW)\}.
	\end{equation}
	We have $\Crit(\cV\times \cW)=\Crit(\cV)\times \Crit(\cW)$. Notice that gradient fields $\cV\times \cW$ and $\cW\times \cV$ are not, in general, equal.
\end{exa}

\begin{prp}
	Assume that $P,Q$ are finite CW-posets and $\cV$ and $\cW$ are gradient fields in $P$ and $Q$, respectively. Then $\cV\times \cW$ is a gradient field on $P\times Q$.
\end{prp}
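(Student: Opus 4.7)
The approach is to assume that $\cV\times\cW$ admits a cycle
\[
((p_1,q_1),(r_1,s_1),(p_2,q_2),(r_2,s_2),\ldots,(p_k,q_k),(r_k,s_k),(p_1,q_1)),
\]
and extract from it a cycle of $\cV$ or of $\cW$. Each vector $((p_i,q_i),(r_i,s_i))\in\cV\times\cW$ is of one of two types: \emph{type~1}, in which $p_i=r_i$ and $(q_i,s_i)\in\cW$, or \emph{type~2}, in which $q_i=s_i$, $(p_i,r_i)\in\cV$, and $q_i\in\Crit(\cW)$. Likewise, each descent $(p_{i+1},q_{i+1})\prec(r_i,s_i)$ in the product CW-poset splits into \emph{subcase~(A)} where $p_{i+1}=r_i$ and $q_{i+1}\prec s_i$, or \emph{subcase~(B)} where $p_{i+1}\prec r_i$ and $q_{i+1}=s_i$.

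The first step is to rule out the combination type~1 with subcase~(B), reading indices cyclically (so that there is always a next vector). In that case $q_{i+1}=s_i$ lies in the $\cW$-vector $(q_i,s_i)$, so $q_{i+1}\in\Reg(\cW)$; but the following vector at $(p_{i+1},q_{i+1})$ would force $q_{i+1}$ either to be the bottom of another $\cW$-vector, contradicting the pairwise disjointness of vectors in $\cW$, or to belong to $\Crit(\cW)$, contradicting $q_{i+1}\in\Reg(\cW)$. Next comes a dimension count on the $Q$-coordinate: type~1 with (A) preserves $\dim q_i$ (up by one, then down by one); type~2 with (B) preserves $\dim q_i$ trivially; and type~2 with (A) strictly decreases $\dim q_i$ by one. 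Since type~1 with (B) is already excluded, $\dim q_i$ is non-increasing around the cycle, and the equality $\dim q_{k+1}=\dim q_1$ forces every step to be either type~1 with (A) or type~2 with (B).

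The final step extracts the cycle in the factor. If at least one index is type~1, let $I_1=\{i_1<\cdots<i_m\}$ be its set of type~1 indices. Along each run of type~2 with (B) steps one has $q_{i+1}=q_i$, so $q_{i_j+1}=q_{i_{j+1}}$ (indices cyclic, $i_{m+1}=i_1$). Then $(q_{i_1},s_{i_1},q_{i_2},s_{i_2},\ldots,q_{i_m},s_{i_m},q_{i_1})$ is a cycle of $\cW$: each $(q_{i_j},s_{i_j})\in\cW$ by type~1, each $q_{i_{j+1}}\prec s_{i_j}$ by (A), and each $(q_{i_{j+1}},s_{i_j})\notin\cW$ because the original descent was non-vector. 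This contradicts that $\cW$ is a gradient field. If instead $I_1$ is empty, every step is type~2 with (B); the non-vector condition ``$((p_{i+1},q_i),(r_i,q_i))\notin\cV\times\cW$'' reduces to $(p_{i+1},r_i)\notin\cV$ because $q_i\in\Crit(\cW)$, and together with $(p_i,r_i)\in\cV$ and $p_{i+1}\prec r_i$ this shows that $(p_1,r_1,p_2,r_2,\ldots,p_k,r_k,p_1)$ is a cycle of $\cV$, contradicting that $\cV$ is a gradient field. The main obstacle is the bookkeeping: one must handle cyclic indexing so that the exclusion in the first step applies also at the wrap-around position $i=k$, and so that stretches of type~2 stays are correctly collapsed into single $\cW$-flow steps when assembling the $\cW$-cycle.
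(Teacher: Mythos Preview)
Your proof is correct and follows the same overall strategy as the paper's: assume a cycle in $\cV\times\cW$ and project it to one of the factors to obtain a cycle there. The execution differs, however. The paper projects directly to the $P$-coordinate, collapses consecutive repetitions in the sequence $(p_i)$, and argues that the resulting sequence (or a shift of it) is a $\cV$-cycle; if all $p_i$ coincide, the $q_i$'s already form a $\cW$-cycle. Your route inserts an intermediate dimension count on $\dim q_i$: after excluding the combination type~1/(B), the $Q$-dimension is monotone around the cycle and hence constant, which forces every step to be either type~1/(A) or type~2/(B). This reduction makes the subsequent projection clean---in particular, in your all-type-2 case the condition $q_i\in\Crit(\cW)$ is available, so the product non-vector condition genuinely reduces to $(p_{i+1},r_i)\notin\cV$, a point the paper's terser argument leaves implicit. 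Your version is longer but more transparently verifies all the flow conditions in the factor.
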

\begin{proof}
Let 
	\[
		((p_1,q_1),(p_2,q_2),\dots,(p_{2k+1},q_{2k+1})=(p_1,q_1))
	\]	
	be a cycle of $\cV\times \cW$, i.e., $((p_{2i-1},q_{2i-1}),(p_{2i},q_{2i}))\in \cV\times \cW$, $(p_{2i+1},q_{2i+1})\prec (p_{2i},q_{2i})$. Assume that $p_i$ are not all equal, and let 
	\[p_{i(1)},p_{i(2)},\dots,p_{i(r)}=p_{i(1)},\qquad i(1)<i(2)<\dots<i(r)\]
	 be all different values of $p_i$. For $s\in\{1,\dots,r-1\}$, the dimensions of $p_{i(s)}$ and $p_{i(s+1)})$ differ by $1$; if $\dim(p_{i(s)})=\dim(p_{i(s+1)})-1$, then $(p_{i(s)},p_{i(s+1)})\in \cV$ which implies that $(p_{i(s+1)},p_{i(s+2)})$, since no cell may belong to two different vectors, and then $p_{i(s+2)}\prec p_{i(s+1)}$. As a consequence, either $(p_{i(1)},p_{i(2)},\dots,p_{i(r)}=p_{i(1)})$ or $(p_{i(2)},p_{i(3)},\dots,p_{i(r)}=p_{i(1)},p_{i(2)})$ is a cycle of $\cV$. If all $p_i$'s are equal, then $(q_1,\dots,q_{2k+1})$ is a cycle of $\cW$. In both cases we get a contradiction.
\end{proof}

Notice that the standard gradient fields on cubes can be defined alternatively by formulas $\cS^\square_{\{x\}}=\{(1,*)\}$,  $\cS^\square_A=\cS^\square_{\{m\}}\times \cS^\square_\Am$.


\section{Permutahedra}

The main goal of this Section is to construct "standard" gradient fields on permutahedra. As before, $A$ is a finite ordered set, $m\in A$ is a  maximal element and $A'=A\setminus \{m\}$. We will write $A=B_1\cupdot\dots\cupdot B_n$ when $B_1,\dots,B_n$ are pairwise disjoint family of subsets of $A$ such that $\bigcup B_i=A$ and the order on every $B_i$ is inherited from $A$.

Recall that $\cP_A$ denotes the poset of ordered partitions of $A$. $\cP_A$ is a CW-poset whose geometrical realization is a permutahedron on letters $A$ \cite[p. 18]{Ziegler}. For a cell $\lambda=B_1|B_2|\dots|B_l\in\cP_A$, we have
\[
	\dim(\lambda)=(\# B_1-1)+(\# B_2-1)+\dots+(\# B_l-1).
\]


Denote 
\begin{equation}
	\cP_A^m := \cP_{A'}|m = \{\lambda|m:\; \lambda\in\cP_{A'}\},\quad \cP^r_A=\cP^A\setminus \cP_A^m.
\end{equation}
Clearly, $\cP_A^m$ is a closed subposet of $\cP_A$, which is isomorphic to $\cP_{A'}$. 

Define discrete vector fields $\cV^{r}_{A}$ on $\cP_A^r$, $\cV^{m}_{A}$ on $\cP_A^m$ and $\cV_{A}=\cV^{r}_{A}\cup \cV^{m}_{A}$ on $\cP_A$ inductively in the following way. We put $\cV^{r}_\emptyset=\cV^{m}_\emptyset=\cV_\emptyset=\emptyset$ 	and, for a $A\neq\emptyset$,
	\begin{align}
		\cV^m_A&=\cV_{A'}|m=\{(\pi|m, \varrho|m):\; (\pi,\varrho)\in \cV_{A'}\}\\
		\cV^r_A & =\{	(\pi|m|B|\varrho, \pi|m\cup B|\varrho):\; \pi\in \cP_C,\; \varrho\in \cP_D,\; B\neq\emptyset,\; A'=C\cupdot B\cupdot D  \}.
	\end{align}

\begin{prp}
	The only critical cell of $\cV_A$ is 
\[
	u_A=a_1|a_2|\dots|a_n,\qquad \{a_1<a_2<\dots<a_n\}=A
\]
having dimension $0$.
\end{prp}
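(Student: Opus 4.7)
The plan is to proceed by induction on $n = \#A$, exploiting the decomposition $\cV_A = \cV_A^m \cup \cV_A^r$ together with Lemma \ref{l:Div} applied to the closed subposet $\cP_A^m \subseteq \cP_A$ (closedness follows because any refinement of a partition whose last block is $\{m\}$ still has $\{m\}$ as its last block). The base case $A = \emptyset$ is immediate: $\cP_\emptyset$ consists only of the empty partition $u_\emptyset$, which is critical for the empty vector field.

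For the inductive step I would handle the two halves separately. On $\cP_A^m$, the map $\cP_{A'} \ni \lambda \mapsto \lambda|m$ is a poset isomorphism sending $\cV_{A'}$ to $\cV_A^m$, so the inductive hypothesis gives that the unique critical cell in $\cP_A^m$ is $u_{A'}|m$; writing $A = \{a_1 < \dots < a_{n-1} < a_n = m\}$, this cell equals $a_1|\dots|a_{n-1}|m = u_A$, of dimension $0$ since every block is a singleton.

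On $\cP_A^r$ the goal is to show that every cell is regular for $\cV_A^r$. An element $\lambda \in \cP_A^r$ is a partition of $A$ whose last block is not $\{m\}$, and such $\lambda$ falls into exactly one of two types: type~(a), where $\{m\}$ is a block of $\lambda$ in a non-last position, or type~(b), where $m$ lies in a block of size at least $2$. A type-(a) cell has a unique expression $\pi|m|B|\varrho$ with $\pi \in \cP_C$, $\varrho \in \cP_D$, $\emptyset \neq B \subseteq A'$ and $A' = C \cupdot B \cupdot D$, where $B$ is the block immediately following $\{m\}$. Symmetrically, a type-(b) cell has a unique expression $\pi|m \cup B|\varrho$, where $B$ is the rest of the block containing $m$. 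The definition of $\cV_A^r$ is precisely the pairing $(\pi|m|B|\varrho,\ \pi|m\cup B|\varrho)$, which therefore realises a bijection between types (a) and (b); hence every cell of $\cP_A^r$ belongs to exactly one vector and is regular. Combined with the first step, $u_A$ is left as the unique critical cell of $\cV_A$.

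I do not expect any serious obstacle, since the argument is essentially bookkeeping of partitions by whether and where the symbol $m$ appears. The only point worth double-checking is the uniqueness of the two decompositions above, which is immediate because $\pi$ and $\varrho$ are recovered as the block-sequences before and after the block containing $m$, and $B$ is recovered as either the block after $\{m\}$ or the complement of $\{m\}$ inside its own block.
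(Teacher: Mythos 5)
Your proposal is correct and follows essentially the same route as the paper: induction on $\#A$ using the decomposition $\cP_A=\cP_A^m\cupdot\cP_A^r$, with the inductive hypothesis handling $\cP_A^m$ via the isomorphism with $\cP_{A'}$ and a direct argument showing $\Reg(\cV_A^r)=\cP_A^r$. The paper dismisses the latter as immediate from the definition, whereas you spell out the bijection between cells with a non-last singleton block $\{m\}$ and cells whose $m$-block has size at least two; this is exactly the intended justification.
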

\begin{proof}
	Immediately from the definition follows that $\Reg(\cV_A^r)=\cP_A^r$ and
	\[
		\Crit_{\cP^m_A}(\cV^m_A)=\Crit_{\cP_{A'}}(\cV_{A'})|m=\{u_{A'}|m\}=\{u_A\}.
	\]	
	Thus, $\Crit_{\cP_A}(\cV_A)=\Crit_{\cP^m_A}(\cV^m_A)\cup \Crit_{\cP^r_A}(\cV^r_A)=u_A$.
\end{proof}

\begin{prp}
	$\cV_A$ is a gradient field.
\end{prp}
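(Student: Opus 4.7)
The plan is to apply Lemma \ref{l:Div} with the decomposition $\cP_A=\cP_A^m\cupdot\cP_A^r$, so I first have to verify that $\cP_A^m$ is a closed subposet: this is clear, because any refinement of a cell $\lambda|m\in\cP_A^m$ can only subdivide the blocks of $\lambda$ (the singleton $\{m\}$ at the end cannot be split further), so the refined cell again lies in $\cP_A^m$. Once this is in hand, the gradient property of $\cV_A^m$ on $\cP_A^m$ follows by induction on $\#A$: the assignment $\pi\mapsto\pi|m$ is a poset isomorphism $\cP_{A'}\xrightarrow{\cong}\cP_A^m$ carrying $\cV_{A'}$ onto $\cV_A^m$, and the inductive hypothesis applied to $A'$ gives the result.

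The essential step is to show that $\cV_A^r$ admits no cycle. For $\lambda\in\cP_A^r$ let $i(\lambda)$ denote the position of the block of $\lambda$ that contains $m$; I claim that along any cycle $(a_1,b_1,\dots,a_k,b_k,a_{k+1}=a_1)$ with $k\ge 1$ the value $i(a_j)$ strictly increases, which yields the desired contradiction. Every $a_j$ (including $a_{k+1}=a_1$) is a source of some vector, so it has the form $\pi_j|m|B_j|\varrho_j$ with $m$ alone in its block, and $b_j=\pi_j|m\cup B_j|\varrho_j$. Since $a_{j+1}$ must also be a source, $m$ must be alone in $a_{j+1}$ as well; consequently the facet $a_{j+1}\prec b_j$ is obtained by splitting the block $m\cup B_j$ of $b_j$ into two non-empty parts, one of which is $\{m\}$. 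The ordered split $\{m\}|B_j$ reproduces $a_j$, and then $(a_{j+1},b_j)=(a_j,b_j)\in\cV_A^r$ would violate the flow condition; hence the split is $B_j|\{m\}$, giving $a_{j+1}=\pi_j|B_j|\{m\}|\varrho_j$ and $i(a_{j+1})=|\pi_j|+2=i(a_j)+1$. Iterating produces $i(a_1)=i(a_{k+1})=i(a_1)+k$, which forces $k=0$. Combining this with the previous paragraph via Lemma \ref{l:Div} then yields that $\cV_A=\cV_A^m\cup\cV_A^r$ is a gradient field.

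The only mildly subtle point is to check that the cell $a_{j+1}=\pi_j|B_j|\{m\}|\varrho_j$ obtained above really lies in $\cP_A^r$, i.e.\ that $\{m\}$ is not the last block; this is automatic, because $a_{j+1}$ is a source of a vector in $\cV_A^r$, so $\{m\}$ in $a_{j+1}$ must be followed by at least one further block, and hence $\varrho_j\neq\emptyset$.
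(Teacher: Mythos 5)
Your proof is correct, and its overall skeleton (induction on $\#A$, the decomposition $\cP_A=\cP_A^m\cupdot\cP_A^r$ with $\cP_A^m$ closed, and Lemma \ref{l:Div}) coincides with the paper's. Where you diverge is the key step, acyclicity of $\cV_A^r$: the paper exhibits an explicit Morse function $h_A(\pi|B|\varrho)=(\#C,\#B)$ (where $m\in B$ and $C$ is the underlying set of $\pi$) with values in $\Z\times\Z_+$ ordered lexicographically using the order $2<3<\dots<1$ on the second factor, and then invokes Lemma \ref{l:Order}; you instead argue directly on a putative cycle, showing that the position of the singleton block $\{m\}$ must increase by exactly one at each step $a_j\to a_{j+1}$, because $a_{j+1}$ is forced to be the facet $\pi_j|B_j|\{m\}|\varrho_j$ of $b_j=\pi_j|m\cup B_j|\varrho_j$ once the split $\{m\}|B_j$ is ruled out by the flow condition. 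Your case analysis is complete (splitting a block of $\pi_j$ or $\varrho_j$ leaves $m$ non-singleton, so such facets cannot be sources), and the closing remark that $\varrho_j\neq\emptyset$ is correctly derived from $a_{j+1}$ being a source. Your route is shorter and avoids the slightly artificial auxiliary order, but note that the paper's Morse function $h_A$ is not a throwaway device: it is reused in Proposition \ref{p:WeakInc} and in the proof of Proposition \ref{p:WKIsGradient} to control cycles of the enlarged field $\cW^r_K$, so a development following your argument would still need to establish something like $h_A$ (or an analogous monotonicity statement for $i(\lambda)$ along arbitrary flows, not just cycles of $\cV^r_A$) later on.
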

\begin{proof}
	This is obvious for $A=\emptyset$, so assume otherwise. There is an isomorphism $(\cP^m_A,\cV^m_A)\cong (\cP_{A'},\cV_{A'})$;  hence, by the inductive hypothesis, $\cV^m_A$ is a gradient field on $\cP^m_A$. By Lemma \ref{l:Div}, it remains to prove that $\cV^r_A$ is a gradient field on $\cP^r_A$.
	
	Define an order $\dot\leq$ on the set $H=\Z\times \Z_+$ in the following way: $(s,t)\mathrel{\dot\leq} (s',t')$ if and only if one of the following conditions is satisfied:
	\begin{itemize}
		\item{$s>s'$,}
		\item{$s=s'$ and $1\neq t\leq t'$,}	
		\item{$s=s'$ and $t'=1$.}	
	\end{itemize}
	This is the lexicographic order on the product, with the inverse order on $\Z$ and the order
	\[
		2<3<4<\dots<1
	\]
	on $\Z_+$. Every element $\lambda\in\cP^r_A$ can be written uniquely as
	\begin{equation}\label{e:LambdaPresentation}
		\lambda=\pi|B|\varrho,
	\end{equation}
	where  $m\in B$, $\pi\in\cP(C)$, $\varrho\in\cP(D)$, $A=C\cupdot B\cupdot D$. Let
	\begin{equation}\label{e:hA}
		h_A(\lambda)=(\# C, \#B).
	\end{equation}
	We will prove that $h_A:\cP^r_A\to H$ is a Morse function. Assume that $\mu\prec\lambda$ and $\lambda$ has a presentation (\ref{e:LambdaPresentation}). If $\mu=\pi'|B|\varrho$ for $\pi'\prec\pi\in\cP_C$, or $\mu=\pi|B|\varrho'$ for $\varrho'\prec\varrho\in \cP_D$, then $(\mu,\lambda)\not\in\cV^r_A$ and $h_A(\lambda)=h_A(\mu)$. Assume otherwise, i.e., that $\mu=\pi|B_1|B_2|\varrho$ for $B_1\cupdot B_2=B$, $B_1,B_2\neq\emptyset$. Consider the following cases:
	
	\begin{itemize}
	\item{$B_1=\{m\}$. Then $(\mu,\lambda)\in\cV^r_A$, and
	\[
		h_A(\mu)=(\#C,1)\mathrel{\dot>}(\#C,\#B)=h_A(\lambda).
	\]
	}
	\item{$\{m\}\subsetneq B_1$. Then $(\mu,\lambda)\not\in\cV^r_A$ and 
	\[
		h_A(\mu)=(\#C,\#B_1)\mathrel{\dot<}(\#C,\#B)=h_A(\lambda),
	\]
	since $\#B_1>1$.
	}
	\item{$m\in B_2$. Then $(\mu,\lambda)\not\in\cV^r_A$ and 
	\[
		h_A(\mu)=(\#C+\#B_1,\#B_2)\mathrel{\dot<}(\#C,\#B)=h_A(\lambda).
	\]
	}
	\end{itemize}
	This proves that $\cV^r_A$ is a gradient field.
\end{proof}

The following picture illustrates the gradient field $\cV_{A}$ for $A=\{1,2,3\}$.
\begin{equation}
\begin{tikzpicture}[scale=0.8]
	\fill[color=gray!20!white] 	(0,0) -- (1,2)--(3,2)--(4,0)--(3,-2)--(1,-2)--(0,0);
	\draw[thick] (0,0) -- (1,2)--(3,2)--(4,0)--(3,-2)--(1,-2)--(0,0);
	\node[left] at (0,0) {$1|2|3$};
	\node[left] at (1,2) {$1|3|2$};
	\node[left] at (1,-2) {$2|1|3$};
	\node[right] at (3,2) {$3|1|2$};
	\node[right] at (4,0) {$3|2|1$};
	\node[right] at (3,-2) {$2|3|1$};
	\draw[very thick,->,color=blue!30!green] (1,2)--(0.5,1);
	\draw[very thick,->,color=blue!30!green] (3,2)--(2,2);
	\draw[very thick,->,color=blue!30!green] (4,0)--(3.5,-1);
	\draw[very thick,->,color=blue!30!green] (3,-2)--(2,-2);
	\draw[very thick,->,color=blue!30!green] (3.5,1)--(2,0);
	\draw[very thick,->,color=red!50!green] (1,-2)--(0.5,-1);
	\fill[color=red] (0,0) circle[radius=0.1];
\end{tikzpicture}
\end{equation}

\section{A gradient field on $\cP_K$}

In this Section, we construct a gradient field on $\cP_K$, for any finite ordered set $A$ and an $A$--cubical complex $K\subseteq \square^A$. The starting point is the restriction of $\cV_K$ to $\cP_A$, i.e.,
\[
	\cV_K=\cV_A|_{\cP_K}=\{(\lambda,\mu)\in \cV_A:\; \lambda,\mu\in \cP_K\}.
\]
In general, the discrete vector field $\cV_K$ has critical cells that are not critical cells of $\cV_A$: if $(\lambda,\mu)\in \cV_A$, $\lambda\in \cP_K$ and $\mu\not\in \cP_K$, then $\lambda\in\Crit(\cV_K)$. We will add some vectors to $\cV_K$ to reduce the number of critical cells.

Denote $\cP^r_K=\cP_A^r\cap \cP_K$, $\cP^m_K=\cP_A^m\cap \cP_K$. Note that $\cP^m_K$ is a closed subposet of $\cP_K$, which is empty if $c(A',\emptyset,m)\not\in K$ and otherwise there is an isomorphism of CW-posets
\begin{equation}
	\cP_{K|^0_{A'}}\ni \lambda \mapsto \lambda|m \in \cP_K.
\end{equation}

\begin{df}
	A cube $c(C,B\cup\{m\},D)\in \square^A$ is \emph{a branching cube} of $K$ if
\begin{itemize}
	\item{$c(C,m\cup B,D)\not\in K$,}
	\item{$c(C,m,B\cup D),c(C\cup m, B,D)\in K$.}
\end{itemize}
	These conditions imply that $B\neq \emptyset$.
	Sequences $(C,B,D)$ such that $c(C,B\cup\{m\},D)$ is a branching cube of $K$ will be called \emph{branching sequences} of $K$. Let $\Br(K)$ be the set of all branching sequences of $K$.
\end{df}

 For $(C,B,D)\in \Br(K)$ let
\begin{equation}
	\cR_{(C,B,D)}=\{\pi|m|B|\varrho:\; \pi\in \cP_{K|_C^0},\; \varrho\in \cP_{K|_D^1}\}\subseteq \cP^r_K
\end{equation}
and let 
\begin{equation}
	\cR_K=\bigcup_{(C,B,D)\in \Br(K)} \cR_{(C,B,D)} \subseteq \cP^r_K.
\end{equation}
Clearly, the posets $\cR_{(C,B,D)}$ are pairwise disjoint, and there is an isomorphism of posets
\begin{equation}
	\cR_{(C,B,D)}\cong \cP_{K|_C^0}\times \cP_{K|_D^1},
\end{equation} 
which shifts the dimensions of elements by $\#B-1$.


\

\begin{prp}
	For an $A$--cubical complex $K$, we have $\Crit_{\cP^r_K}(\cV^r_K)=\cR_K$.
\end{prp}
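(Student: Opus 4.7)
The plan is to analyse $\cV^r_K = \cV^r_A|_{\cP^r_K}$ cell-by-cell, splitting $\cP^r_A$ according to the role each cell plays in $\cV^r_A$. Observe that every element of $\cP^r_A$ has $m$ either as a singleton block in a non-final position or as a proper element of a block with other letters; in the first case the cell is the unique \emph{source} $\pi|m|B|\varrho$ of a vector of $\cV^r_A$, while in the second case it is the unique \emph{target} $\pi|m\cup B|\varrho$. Thus $\Crit_{\cP^r_A}(\cV^r_A) = \emptyset$, recovering the earlier computation that $u_A$ is the only critical cell of $\cV_A$, and a cell of $\cP^r_K$ becomes critical for $\cV^r_K$ only when its partner under $\cV^r_A$ fails to lie in $\cP_K$.

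Next I would show that no target cell is critical. If $\mu=\pi|m\cup B|\varrho\in\cP_K$, applying Proposition~\ref{p:CompositionLemma} to the partition $A=C\cupdot(B\cup\{m\})\cupdot D$ yields $\pi\in\cP(K|^0_C)$, $\varrho\in\cP(K|^1_D)$, and $c(C,B\cup\{m\},D)\in K$. Since $K$ is closed under face maps and the cubes $c(C,\{m\},B\cup D)$ and $c(C\cup\{m\},B,D)$ are both faces of $c(C,B\cup\{m\},D)$, they also belong to $K$. Reapplying Proposition~\ref{p:CompositionLemma} to the partition $A=C\cupdot\{m\}\cupdot B\cupdot D$ then gives $\lambda=\pi|m|B|\varrho\in\cP_K$, so the vector $(\lambda,\mu)$ belongs to $\cV^r_K$ and $\mu\in\Reg(\cV^r_K)$.

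It remains to characterise critical source cells. A source $\lambda=\pi|m|B|\varrho\in\cP^r_K$ is critical iff its target $\mu=\pi|m\cup B|\varrho$ fails to lie in $\cP_K$. From $\lambda\in\cP_K$ Proposition~\ref{p:CompositionLemma} already supplies $\pi\in\cP(K|^0_C)$, $\varrho\in\cP(K|^1_D)$, $c(C,\{m\},B\cup D)\in K$, and $c(C\cup\{m\},B,D)\in K$; then $\mu\notin\cP_K$ reduces by Proposition~\ref{p:CompositionLemma} to the single condition $c(C,B\cup\{m\},D)\notin K$. These three conditions on $(C,B,D)$ are exactly the definition of a branching sequence, and the remaining freedom in $\pi,\varrho$ is parametrised by $\cP_{K|^0_C}\times\cP_{K|^1_D}$, so the set of critical sources attached to the branching sequence $(C,B,D)$ is exactly $\cR_{(C,B,D)}$.

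Taking the union over all $(C,B,D)\in\Br(K)$ yields $\Crit_{\cP^r_K}(\cV^r_K)=\cR_K$. The main point to get right is the asymmetry between sources and targets: face-closure of $K$ makes missing partners impossible on the target side but fully possible on the source side, and the branching condition is just the precise translation, via Proposition~\ref{p:CompositionLemma}, of ``target missing while source survives''. No new combinatorics is needed beyond this bookkeeping.
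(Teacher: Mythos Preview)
Your proof is correct and follows essentially the same route as the paper: both use Proposition~\ref{p:CompositionLemma} to translate membership in $\cP_K$ into cube conditions and then read off the branching-sequence definition. You are in fact slightly more careful than the paper, which jumps directly to ``there exists $\mu$ with $(\lambda,\mu)\in\cV^r_A$'' without explicitly ruling out the possibility that $\lambda$ is a \emph{target} whose source has fallen out of $\cP_K$; your face-closure argument handles this case cleanly.
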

\begin{proof}
	If $\lambda=\pi|m|B|\varrho\in \cR_{(C,B,D)}$, then $(\pi|m|B|\varrho,\pi|m\cup B |\varrho)\in \cV^r_A$ and $c(C,m\cup B,D)\not\in K$; as a consequence, $\pi|m\cup B |\varrho\not\in \cP^r_K$ and then $\lambda\in \Crit_{\cP^r_K}(\cV^r_K)$. This proves that $\cR_K\subseteq \Crit_{\cP^r_K}(\cV^r_K)$.

	Assume that $\lambda\in \Crit_{\cP^r_K}(\cV^r_K)$. Since $\Crit_{\cP^r_A}(\cV^r_A)=\emptyset$, there exists $\mu\in \cP_A^r$ such that $(\lambda,\mu)\in \cV^r_A$ and $\mu\not\in\cP^r_K$. The definition of $\cV^r_A$ implies that
\[
	\lambda=\pi|m|B|\varrho,\qquad \mu=\pi|m\cup B|\varrho
\]	
	for $A=C\cupdot m \cupdot B\cupdot D$, $\pi\in \cP_C$, $\varrho\in \cP_D$. By \ref{p:CompositionLemma}, $\lambda\in \cP_K$ implies that $\pi\in\cP_{K|^0_C}$, $\varrho\in \cP_{K|^1_D}$ and $c(C,m,B\cup D),c(C\cup m, B,D)\in K$. Thus, since $\mu\not\in \cP_{K}$, $c(C,m\cup B,D)$ cannot belong to $K$. As a consequence, $(C,B,D)\in \Br(K)$ and then $\lambda\in\cR_{(C,B,D)}\subseteq \cR_K$.
\end{proof}

As a consequence, there is a decomposition 
\begin{equation}\label{e:Decomposition}
	\cP_K=\cP_K^m \cupdot  \Reg(\cV^r_K)\cupdot \cR_K.
\end{equation}
We will define inductively discrete vector fields on the components of this decomposition; they are empty if $A=\emptyset$, and otherwise they are inductively defined by the following formulas:
\begin{itemize}
	\item{On $\cP_K^m$:
	\[
		\cW^m_K=\cW_{K|^0_{A'}}|m=\{(\lambda|m,\mu|m):\; (\lambda,\mu)\in \cW_{K|^0_{A'}}\}
	\]
	if $c(\emptyset, A', m)\in K$; otherwise, $\cW^m_K=\emptyset$. Notice that if $\lambda|m\in \cP^m_K$ and $\mu\in \cP_{K|^0_{A'}}$, then also $\mu|m\in \cP^m_K$, which guarantees that this definition is valid.
	}
	\item{On $\Reg(\cV^r_K)$ we take $\cV^r_K$.}
	\item{For $(C,B,D)\in \Br(K)$, let $\cY_{(C,B,D)}$ be a discrete vector field on $\cR_{(C,B,D)}$
	\begin{multline*}
		\cY_{(C,B,D)}=\cW_{K|_{C}^0}|m|B|\cW_{K|_D^1}=\\
		\{(\pi|m|B|\varrho, \pi|m|B|\varrho'):\; \pi\in\cP_{K|_C^0}, (\varrho,\varrho')\in \cW_{K|_D^1}\}\\
		\cup \{(\pi|m|B|\varrho, \pi'|m|B|\varrho):\; (\pi,\pi')\in\cW_{K|_C^0}, \varrho\in \Crit(\cW_{K|_D^1})\}.
	\end{multline*}
	This is isomorphic to the product discrete vector field $\cW_{K|_{C}^0}\times \cW_{K|_D^1}$ on $\cP_{K|_{C}^0}\times \cP_{K|_{D}^1}$ via the isomorphism
	\[
		\cP_{K|_{C}^0} \times \cP_{K|_{D}^1}\ni (\lambda,\mu) \mapsto \lambda|m|B|\mu \in \cR_{(C,B,D)}
	\]
	of the underlying posets.
	}
	\item{On $\cR_K$:
	\[
		\cY_K=\bigcup \cY_{(C,B,D)}.
	\]
	}
\end{itemize}
Finally, we define $\cW^r_K=\cV^r_K \cup \cY_K$ and
\begin{equation}
	\cW_K=\cW^m_K\cup \cW^r_K=\cW^m_K  \cup \cV^r_K \cup \cY_K.
\end{equation}
An easy inductive argument shows that $\cV_K\subseteq \cW_K$.

Recall (\ref{e:hA}) that $h_A:\cP^r_A\to H$ is a weak Morse function associated to $\cV^r_A$.

\begin{prp}\label{p:WeakInc}
	If $(\lambda,\mu)\in \cY^r_K$, then $h_A(\lambda)=h_A(\mu)$. As a consequence, $(\lambda,\mu)\in \cW^r_K$ implies that $h_A(\lambda)\mathrel{\dot\geq} h_A(\mu)$.
\end{prp}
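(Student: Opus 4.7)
The plan is to read off both claims directly from how $h_A$ interacts with the stratification $\cR_K = \bigsqcup_{(C,B,D) \in \Br(K)} \cR_{(C,B,D)}$ and from the Morse property of $h_A$ established in the preceding proposition.

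\medskip

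For the first claim, the key observation is that every vector in $\cY_K$ lies inside a single piece $\cR_{(C,B,D)}$. Indeed, by construction $\cY_{(C,B,D)}$ consists of pairs $(\pi|m|B|\varrho,\pi|m|B|\varrho')$ or $(\pi|m|B|\varrho,\pi'|m|B|\varrho)$; in both cases the outer data $C,B,D$, the singleton block $\{m\}$ and the subset $C$ preceding $m$ are the same for $\lambda$ and $\mu$. Now I would use the canonical presentation \eqref{e:LambdaPresentation} appearing in $h_A$: for a cell of $\cR_{(C,B,D)}$, the block containing $m$ is the singleton $\{m\}$, and the total set of letters strictly preceding that block is exactly $C$. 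Thus $h_A(\lambda) = (\#C, 1) = h_A(\mu)$ for every vector of $\cY_{(C,B,D)}$, which gives the first claim after taking the union over $\Br(K)$.

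\medskip

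For the consequence, I would decompose $\cW^r_K = \cV^r_K \cup \cY_K$ and treat the two cases. If $(\lambda,\mu) \in \cV^r_K$, then $(\lambda,\mu) \in \cV^r_A$ as well, and the previous proposition exhibited $h_A$ as a Morse function associated to $\cV^r_A$; the Morse condition for vectors gives $h_A(\lambda) \mathrel{\dot>} h_A(\mu)$, which in particular implies $h_A(\lambda) \mathrel{\dot\geq} h_A(\mu)$. If instead $(\lambda,\mu) \in \cY_K$, the first claim gives equality $h_A(\lambda) = h_A(\mu)$, hence the weak inequality as well. Combining the two cases finishes the argument.

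\medskip

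I do not anticipate a real obstacle here: the statement is essentially a bookkeeping lemma, whose only content is matching the internal $\pi|m|B|\varrho$-notation used to define $\cR_{(C,B,D)}$ and $\cY_{(C,B,D)}$ against the external $\pi|B|\varrho$-notation (with $m\in B$) used to define $h_A$. The mildly delicate point is remembering that, under this translation, the ``$B$'' in $h_A$ is the singleton $\{m\}$, so the second coordinate of $h_A$ equals $1$ throughout $\cR_{(C,B,D)}$; this is exactly what forces $h_A$ to be constant on each $\cY_{(C,B,D)}$ rather than merely weakly decreasing.
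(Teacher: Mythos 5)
Your proof is correct and follows essentially the same route as the paper: the paper's entire argument is the one-line observation that $h_A(\lambda)=h_A(\mu)=(\#C,1)$ for $(\lambda,\mu)\in\cY_{(C,B,D)}$, which is exactly your first claim, and the deduction of the consequence from the Morse property of $h_A$ on $\cV^r_A$ is the same routine case split. You have merely spelled out the bookkeeping that the paper leaves implicit.
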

\begin{proof}
	We have $h_A(\lambda)=h_A(\mu)=(\#C,1)$ for $(\lambda,\mu)\in \cY_{(C,B,D)}$.
\end{proof}

\begin{prp}\label{p:WKIsGradient}
	$\cW_K$ is a gradient field.
\end{prp}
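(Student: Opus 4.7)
I would proceed by induction on $\#A$, with the base case $A=\emptyset$ being trivial since $\cW_\emptyset=\emptyset$. For the inductive step, the first move is to exploit the decomposition \eqref{e:Decomposition} via Lemma \ref{l:Div}. The subposet $\cP^m_K$ is closed in $\cP_K$, since any refinement of a partition of the form $\lambda|m$ still ends with the singleton block $\{m\}$. Hence it is enough to prove separately that $\cW^m_K$ is a gradient field on $\cP^m_K$ and that $\cW^r_K=\cV^r_K\cup\cY_K$ is a gradient field on $\cP^r_K$.

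The case of $\cW^m_K$ is handled directly by the inductive hypothesis. If $c(\emptyset, A', m)\notin K$ then $\cW^m_K=\emptyset$ is trivially a gradient field; otherwise the isomorphism $\cP_{K|^0_{A'}}\ni\lambda\mapsto\lambda|m\in\cP^m_K$ identifies $\cW^m_K$ with $\cW_{K|^0_{A'}}$, which is a gradient field by induction because $\#A'<\#A$.

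The substantial step is ruling out cycles in $\cW^r_K$. Suppose a cycle $(a_1,b_1,\dots,a_k,b_k,a_{k+1}=a_1)$ exists. Since $h_A$ is a Morse function for $\cV^r_A$ and the endpoints of every facet pair here lie in $\cP^r_K$, membership in $\cV^r_K$ coincides with membership in $\cV^r_A$ along this cycle. Combining this with Proposition \ref{p:WeakInc}, each step satisfies $h_A(a_i)\mathrel{\dot\geq}h_A(b_i)\mathrel{\dot\geq}h_A(a_{i+1})$, with strict first inequality whenever $(a_i,b_i)\in\cV^r_K$. Going around the cycle forces all these inequalities to be equalities, so every vector of the cycle lies in $\cY_K$ and every cell in $\cR_K$.

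Now write $a_i\in\cR_{(C_i,B_i,D_i)}$ with $a_i=\pi_i|m|B_i|\varrho_i$. Inspecting the three types of facets of $b_i$, namely refining $\pi_i$, splitting the block $B_i$, or refining $\varrho_i$, one checks that for $a_{i+1}\in\cR_K$ one has in every case $C_{i+1}=C_i$ and $D_{i+1}\supseteq D_i$, with strict inclusion precisely when $B_i$ is split. Since the cycle returns to its starting cell, $D_i$ must be constant, so only the two ``refining'' types of facet occur and $(C_i,B_i,D_i)=(C,B,D)$ throughout. The cycle therefore lives entirely in $\cR_{(C,B,D)}$, and its vectors lie in $\cY_{(C,B,D)}$. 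Via the isomorphism $\cR_{(C,B,D)}\cong\cP_{K|^0_C}\times\cP_{K|^1_D}$ this is a cycle in the product vector field $\cW_{K|^0_C}\times\cW_{K|^1_D}$. By induction both factors are gradient fields and by the product proposition so is their product, yielding the desired contradiction.

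The main obstacle is the final confinement step: propagating the local monotonicity of $\#D_i$ along a cycle to conclude that all intermediate cells belong to a single $\cR_{(C,B,D)}$. The rest of the argument is essentially bookkeeping that stitches together the inductive hypothesis, the Morse function $h_A$, and the product construction.
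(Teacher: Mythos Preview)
Your proof is correct and follows essentially the same approach as the paper's: induction on $\#A$, reduction via Lemma~\ref{l:Div} to $\cW^r_K$, use of $h_A$ together with Proposition~\ref{p:WeakInc} to force all vectors of a putative cycle into $\cY_K$, then the monotonicity $C_{i+1}=C_i$, $B_{i+1}\subseteq B_i$ (equivalently your $D_{i+1}\supseteq D_i$) to confine the cycle to a single $\cR_{(C,B,D)}$, and finally the product argument. Your write-up is slightly more explicit about the facet analysis than the paper's, but the strategy is identical.
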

\begin{proof}
	Proof by induction with respect to the cardinality of $A$. By the inductive hypothesis, $\cW^m_K\simeq \cW_{K|^0_{A'}}$ is a gradient field on $\cP^m_K\simeq \cP_{K|^0_{A'}}$. Since $\cP^m_K$ is closed in $\cP_K$, by \ref{l:Div} it remains to prove that $\cW^r_K$ is a gradient field on $\cP^r_K$. Assume that  
	\[
		(\lambda_1, \mu_1, \lambda_2, \mu_2, \dots, \mu_k,\lambda_1)
	\]
	is a cycle of $\cW^r_K$. By \ref{p:WeakInc}, all values $h_A(\lambda_i)$, $h_A(\mu_i)$ are equal; thus, $(\lambda_i,\mu_i)\in \cY_K$ for all $i$, since $h_A$ is a weak Morse function of $\cV^r_K$.
	For every $i$, $\lambda_i$ and $\mu_i$ must lie in the same component $\cR_{(C_i,B_i,D_i)}$. Moreover, $C_{i+1}=C_i$ and $B_{i+1}\subseteq B_i$; this implies that the triples  $(C_i,B_i,D_i)$ are equal for all $i$. Thus, this cycle is a cycle of $\cY_{(C,B,D)}$ for some $(C,B,D)\in \Br(K)$; this leads to a contradiction since $\cY_{C,B,D}$ is isomorphic to $\cW_{K|^0_C}\times \cW_{K|^1_D}$, which is a gradient field by the inductive hypothesis.
\end{proof}

As a consequence of \cite[Theorem 1.2]{ZPerm} and Theorem \ref{t:Forman} we obtain
\begin{cor}
	For an $A$--cubical complex $K\subseteq \square^A$, the space $\vec{P}(K)_{\bO}^\bI$ is homotopy equivalent to a CW-complex whose $k$--cells correspond to $k$--dimensional critical cells of $\cW_K$.
\end{cor}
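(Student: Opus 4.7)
The plan is to chain together the homotopy equivalences already built up in the paper with Forman's theorem. First I would invoke \cite[Theorem 1.2(a)]{ZPerm} (recalled just after equation (\ref{e:GeometricRealization})) to identify $\vec{P}(K)_{\bO}^{\bI} \simeq |\Ch(K)_{\bO}^{\bI}|$. Then Proposition \ref{p:PChIso} gives a poset isomorphism $\Ch(\square^A)_{\bO}^{\bI} \cong \cP_A$ which, by the defining equation (\ref{e:PKDef}), restricts to an isomorphism $\Ch(K)_{\bO}^{\bI} \cong \cP_K$. Passing to nerves yields $|\Ch(K)_{\bO}^{\bI}| \cong |\cP_K|$, so altogether $\vec{P}(K)_{\bO}^{\bI} \simeq |\cP_K|$.

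Next I would check that $\cP_K$ is a CW-poset, as required before Theorem \ref{t:Forman} may be applied. The full permutahedron poset $\cP_A$ is a CW-poset (as noted at the start of Section 5), so it suffices to show that $\cP_K$ is a closed subposet of $\cP_A$ in the sense preceding Lemma \ref{l:Div}. If $\mu \leq \lambda$ in $\cP_A$ and $\lambda \in \cP_K$, then $\mu$ refines $\lambda$, so each cube $c^\mu_j$ is obtained from some cube $c^\lambda_i$ by face maps; since $K$ is face-closed in $\square^A$, each $c^\mu_j$ lies in $K$, giving $\mu \in \cP_K$ by (\ref{e:PKDef}). Hence $\cP_K$ is a closed subposet of the CW-poset $\cP_A$, and is itself a CW-poset whose realization inherits a subcomplex structure from $|\cP_A|$.

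Finally, Proposition \ref{p:WKIsGradient} guarantees that $\cW_K$ is a gradient field on the CW-poset $\cP_K$. Applying Theorem \ref{t:Forman} produces a CW-complex whose $k$--cells are in bijection with the $k$--dimensional elements of $\Crit(\cW_K)$ and which is homotopy equivalent to $|\cP_K|$. Composing with the chain of equivalences from the first paragraph yields the statement.

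There is essentially no combinatorial obstacle here, since the gradient field was already constructed and shown to be acyclic; the only verification that is not purely formal is the face-closure argument showing $\cP_K$ is a sub-CW-poset of the permutahedron, and even that is immediate from the description of $\leq$ on $\cP_A$ via cube-chain refinement together with the definition of an $A$--cubical complex.
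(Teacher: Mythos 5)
Your argument is correct and follows exactly the route the paper intends: the paper states this corollary with no written proof, simply as a consequence of \cite[Theorem 1.2]{ZPerm} (giving $\vec{P}(|K|)_{\bO}^{\bI}\simeq|\Ch(K)_{\bO}^{\bI}|\simeq|\cP_K|$ via Proposition \ref{p:PChIso}) together with Proposition \ref{p:WKIsGradient} and Theorem \ref{t:Forman}. Your extra verification that $\cP_K$ is a CW-poset (as a downward-closed subposet of the permutahedron poset $\cP_A$, using that $K$ is closed under face maps) is a correct filling-in of a hypothesis the paper leaves implicit via \cite[Theorem 1.3]{ZPerm}, not a different approach.
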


The following inductive formula for critical cells of $\cW_K$ is an immediate consequence of the definition of $\cW_K$:
\begin{prp}\label{p:CritDesc}
	Let $K$ be an $A$--cubical complex. If $A=\emptyset$, then $\Crit_{\cP_K}(\cW_K)=\cP_K$; if $A\neq\emptyset$
	\begin{align*}
		\Crit_{\cP^m_K}(\cW^m_K)&=\begin{cases}
			\{\lambda|m\in \cP_K:\; \lambda\in \Crit(\cW_{K|^0_{A'}})\} & \text{if $c(\emptyset,A',m)\in K$}\\
			\emptyset & \text{otherwise}
		\end{cases}		\\
		\Crit_{\cP^r_K}(\cW^r_K)&=\bigcup_{(C,B,D)\in \Br(K)} \{\pi|m|B|\varrho:\; \pi\in \Crit(\cW_{K|^0_C}),\; \varrho\in\Crit(\cW_{K|^1_D})\},\\
		\Crit_{\cP_K}(\cW_K)&=\Crit_{\cP^m_K}(\cW^m_K)\cup \Crit_{\cP^r_K}(\cW^r_K)
	\end{align*}
\end{prp}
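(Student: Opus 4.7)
The plan is to prove the formula by induction on $\#A$, essentially by unwinding the inductive definition of $\cW_K$ given in the preceding section. The base case $A = \emptyset$ is immediate: $\cW_K$ is empty, so every element of $\cP_K$ is critical. For the inductive step, I would start from the decomposition $\cP_K = \cP^m_K \cupdot \cP^r_K$ together with $\cW_K = \cW^m_K \cup \cW^r_K$, and observe that every vector of $\cW^m_K$ has both endpoints in $\cP^m_K$ while every vector of $\cW^r_K$ has both endpoints in $\cP^r_K$. This split of the vector field immediately yields $\Crit_{\cP_K}(\cW_K) = \Crit_{\cP^m_K}(\cW^m_K) \cup \Crit_{\cP^r_K}(\cW^r_K)$, reducing the problem to analyzing the two summands separately.

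For the $\cP^m_K$ contribution: if $c(\emptyset, A', m) \notin K$, then $\cW^m_K = \emptyset$ by definition, and one also has $\cP^m_K = \emptyset$ (which must be checked), so the formula holds vacuously. Otherwise, the map $\lambda \mapsto \lambda|m$ is an isomorphism of CW-posets $\cP_{K|^0_{A'}} \cong \cP^m_K$ that transports $\cW_{K|^0_{A'}}$ onto $\cW^m_K$. Critical cells therefore correspond bijectively, and applying the inductive hypothesis to $K|^0_{A'}$ delivers the stated formula.

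For the $\cP^r_K$ contribution: the preceding proposition yields $\cP^r_K = \Reg(\cV^r_K) \cupdot \cR_K$ with $\Crit_{\cP^r_K}(\cV^r_K) = \cR_K$. Since $\cW^r_K = \cV^r_K \cup \cY_K$ and $\cY_K$ is contained entirely within $\cR_K$, the cells of $\Reg(\cV^r_K)$ remain regular and the equality $\Crit_{\cP^r_K}(\cW^r_K) = \Crit_{\cR_K}(\cY_K)$ holds. Using the disjoint partition $\cR_K = \bigcup_{(C,B,D) \in \Br(K)} \cR_{(C,B,D)}$ together with $\cY_K = \bigcup \cY_{(C,B,D)}$, this reduces to analyzing each component; on $\cR_{(C,B,D)}$, the isomorphism $\cR_{(C,B,D)} \cong \cP_{K|^0_C} \times \cP_{K|^1_D}$ sends $\cY_{(C,B,D)}$ to the product discrete vector field $\cW_{K|^0_C} \times \cW_{K|^1_D}$, whose critical cells form $\Crit(\cW_{K|^0_C}) \times \Crit(\cW_{K|^1_D})$ by the product formula established earlier. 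Combined with the inductive hypothesis applied to each factor, this yields the claimed union.

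The argument is essentially mechanical; the only real obstacle is notational bookkeeping, specifically tracking how the restrictions $K|^\varepsilon_B$ and the branching data interact with the recursive constructions and ensuring that the CW-poset isomorphisms identify the relevant vector fields correctly.
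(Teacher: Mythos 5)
Your argument is correct and is precisely the unwinding of the definition of $\cW_K$ that the paper has in mind --- the paper states this proposition without proof, calling it an immediate consequence of the definition, and your route via the decomposition $\cP_K=\cP^m_K\cupdot\Reg(\cV^r_K)\cupdot\cR_K$, transporting the vector fields along the isomorphisms $\cP_{K|^0_{A'}}\cong\cP^m_K$ and $\cR_{(C,B,D)}\cong\cP_{K|^0_C}\times\cP_{K|^1_D}$ and invoking the product formula $\Crit(\cV\times\cW)=\Crit(\cV)\times\Crit(\cW)$, is exactly the intended justification. The one point you rightly flag as needing a check, namely that $\cP^m_K=\emptyset$ when the relevant edge $c(A',\{m\},\emptyset)$ is absent from $K$, follows from Proposition \ref{p:CompositionLemma}; note also that induction is not strictly necessary, since the stated formula is itself recursive and each clause follows from a single unwinding of the definition of $\cW_K$.
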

In the next Section we will obtain an explicit formula for the critical cells of $\cW_K$.

\section{Explicit formula for the critical cells}

For any finite ordered set $B=\{b_1<b_2<\dots<b_l\}$ define $\tau_B,\kappa_B\in \cP_B$ by
\begin{equation}\label{e:TauKappa}
	\tau_B=b_1|b_2|\dots|b_l,\qquad \kappa_B=b_l | \{b_1,b_2,\dots,b_{l-1}\};
\end{equation}
for $\kappa_B$ we require that $B$ has at least two elements.

\begin{df}\label{d:CrSeq}
	\emph{A critical sequence} in an $A$--cubical complex $K$ is a pair of sequences of subsets $((E_j)_{j=1}^q,(F_j)_{j=0}^q)$ of $A$ such that
\begin{enumerate}[(a)]
	\item{$A=E_1\cupdot \dots\cupdot E_q\cupdot F_0\cupdot F_1\cupdot \dots \cupdot F_q$.}
	\item{\emph{The critical cell}
		\[
			\sigma((E_j),(F_j)):=\tau_{F_0}|\kappa_{E_1}|\tau_{F_1}|\kappa_{E_2}|\dots|\tau_{F_{q-1}}|\kappa_{E_q}|\tau_{F_q}\in \cP_A
		\]
		associated to $((E_j),(F_j))$ belongs to $\cP_K$.}
	\item{For every $j\in\{1,\dots,q\}$, either $F_{j-1}=\emptyset$ or $\max(F_{j-1})<\max(E_j)$,}
	\item{For every $j\in\{1,\dots,q\}$, $c(C_j,E_j,D_j)\not\in K$, where
	\begin{align}\label{e:CDSetsDef}
		C_j&= F_0\cup E_1\cup F_1 \cup E_2\cup \dots\cup E_{j-1}\cup F_{j-1}\\ 
		D_j&= F_j\cup E_{j+1}\cup F_{j+1} \cup E_{j+2}\cup \dots\cup E_{q}\cup F_{q} \notag
	\end{align}
	}
	If (b) is satisfied, this is equivalent to the condition
	$\tau_{F_0}|\kappa_{E_1}|\dots |\tau_{F_{j-1}}|E_j|\tau_{F_j}|\dots|\kappa_{E_q}|\tau_{F_q}\not\in \cP_K$. Let $\CrSeq(K)$ be the set of all critical sequences in $K$.
	
\end{enumerate}
	We do not require that the sets $F_j$ are non-empty but the conditions (b) and (d) imply that every $E_j$ has at least two elements.
	\emph{The dimension} of a critical sequence $((E_j)_{j=1}^q,(F_j)_{j=0}^q)$ is
	\begin{equation}
		\dim((E_j)_{j=1}^q,(F_j)_{j=0}^q) = \sum_{j=1}^q (\# E_j - 2),	
	\end{equation}
	which is equal to the dimension of the associated critical cell.
	Let $\CrSeq^d(K)$ denote the set $d$--dimensional critical sequences.
	
	Let $\CrCell(K)\subseteq \cP_K$ be the set of critical cells in $K$, i.e., the cells that are associated to a critical sequence, and let $\CrCell^d(K)\subseteq \CrCell(K)$ be the subset of $d$--dimensional cells. Notice that there are bijections
\[
	\CrSeq(K)\cong \CrCell(K)\quad \text{and} \quad \CrSeq^d(K)\cong \CrCell^d(K)
\]	
	since every critical cell $\lambda$ determines a unique critical sequence $((E_j),(F_j))\in \CrSeq(K)$ such that $\lambda=\sigma((E_j),(F_j))$, and the dimensions of critical sequences and of the associated critical cells coincide.
\end{df}

\begin{prp}\label{p:ExplicitDescOfCrit}
	For every $A$--cubical complex $K$ and $d\geq 0$, we have
	\[
		\Crit^d(\cW_K)=\CrCell^d(K)\cong \CrSeq^d(K).
	\]
\end{prp}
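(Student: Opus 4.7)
The plan is to prove the proposition by induction on $\#A$, using Proposition \ref{p:CritDesc} as the inductive description of $\Crit(\cW_K)$ and matching it with $\CrCell(K)$ by tracking where the maximal element $m=\max A$ sits inside a critical sequence. The base case $A=\emptyset$ is immediate: both $\Crit(\cW_K)$ and $\CrSeq(K)$ collapse to a single $0$-dimensional element (the empty partition on one side, the trivial sequence $q=0$, $F_0=\emptyset$ on the other) when $K$ is nonempty, and both are empty otherwise.

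For the inductive step I would take a critical sequence $((E_j)_{j=1}^q,(F_j)_{j=0}^q)$ of $K$ and locate $m$. If $m\in F_j$ for some $j<q$, condition (c) of Definition \ref{d:CrSeq} applied at index $j+1$ would force $m=\max(F_j)<\max(E_{j+1})\leq m$, a contradiction; so either $m\in F_q$ or $m\in E_j$ for some $j$, the two alternatives matching the summands $\Crit(\cW^m_K)$ and $\Crit(\cW^r_K)$ of Proposition \ref{p:CritDesc}. In the first case, $\tau_{F_q}$ ends with $m$, so $\sigma=\lambda|m$ where $\lambda$ is the cell of $((E_j)_{j=1}^q,(F_0,\dots,F_{q-1},F_q\setminus\{m\}))$; conditions (a), (c), (d) transfer directly, and (b), namely $\lambda\in\cP_{K|^0_{A'}}$, follows from $\sigma\in\cP_K$ via Proposition \ref{p:CompositionLemma}. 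The inductive hypothesis then places $\lambda$ in $\Crit(\cW_{K|^0_{A'}})$, so $\sigma\in\Crit(\cW^m_K)$. In the second case $m=\max(E_j)$ and $\kappa_{E_j}=m|(E_j\setminus\{m\})$; setting $B=E_j\setminus\{m\}$, $C=C_j$, $D=D_j$ from (\ref{e:CDSetsDef}), one has $\sigma=\pi|m|B|\varrho$ with $\pi=\tau_{F_0}|\kappa_{E_1}|\dots|\tau_{F_{j-1}}$ and $\varrho=\tau_{F_j}|\dots|\tau_{F_q}$. Proposition \ref{p:CompositionLemma} decomposes $\sigma\in\cP_K$ into $\pi\in\cP_{K|^0_C}$, $\varrho\in\cP_{K|^1_D}$ together with $c(C,\{m\},B\cup D),c(C\cup\{m\},B,D)\in K$, which combined with condition (d) gives exactly $(C,B,D)\in\Br(K)$; the two truncated sequences are critical sequences for the restrictions, and induction supplies $\pi\in\Crit(\cW_{K|^0_C})$ and $\varrho\in\Crit(\cW_{K|^1_D})$.

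The reverse direction is symmetric: a cell $\lambda|m$ or $\pi|m|B|\varrho$ furnished by Proposition \ref{p:CritDesc} reassembles into a critical sequence of $K$ by reinserting $m$ at the end of $F_q$ or as the maximum of $E_j=B\cup\{m\}$; dimensions match since appending the singleton $|m$ contributes $0$ while the block $|B$ contributes $\#B-1=\#E_j-2$, so the total $\sum(\#E_i-2)$ is preserved on both sides of the bijection. I expect the main obstacle to be the case-$(ii)$ bookkeeping: one must verify precisely that the three clauses defining a branching sequence, together with $\pi,\varrho$ lying in the appropriate $\cP_{K|^\varepsilon_\bullet}$, are jointly equivalent to $\sigma\in\cP_K$ plus critical-sequence condition (d) at position $j$. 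Once this equivalence is recorded via Proposition \ref{p:CompositionLemma}, the induction closes cleanly.
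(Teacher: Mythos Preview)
Your proposal is correct and follows essentially the same inductive strategy as the paper's proof: both argue by induction on $\#A$, locate the maximal element $m$ in either $F_q$ or some $E_j$ (ruling out $m\in F_j$ for $j<q$ via condition (c)), and then match these two cases to the two summands in Proposition~\ref{p:CritDesc}, using Proposition~\ref{p:CompositionLemma} to transfer membership in $\cP_K$ to the restricted complexes. Your treatment of the branching-sequence verification in case $m\in E_j$ is slightly more explicit than the paper's, but the argument is the same.
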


\begin{proof}
	This is obvious if $A=\emptyset$, so we assume otherwise and proceed inductively. Assume that $((E_j)_{j=1}^q,(F_j)_{j=0}^q)\in\CrSeq(K)$; we will show that $\lambda:= \sigma((E_j),(F_j))\in\Crit(\cW_K)$. There are two cases to consider:
	\begin{itemize}
	\item{
	\textit{There exists $r$ such that $m\in E_r$.} Then
\begin{align*}
	((E_j)_{j=1}^{r-1},(F_j)_{j=0}^{r-1}) &\in\CrSeq(K|^0_{C_j})\\
	((E_j)_{j=r+1}^q,(F_j)_{j=r}^q) &\in \CrSeq(K|^1_{D_j})
\end{align*}	
	where $C_j, D_j$ are defined as in (\ref{e:CDSetsDef}). From the inductive hypothesis,
	\begin{align*}
		\pi & := \sigma((E_j)_{j=1}^{r-1},(F_j)_{j=0}^{r-1})\in\Crit(K|^0_{C_j}) \text{ and} \\	
		\varrho &:= \sigma((E_j)_{j=r+1}^{q},(F_j)_{j=r}^{q})\in\Crit(K|^1_{D_j}).
	\end{align*}
	The conditions (b) and (d) imply that $(C_j,E_j\setminus\{m\}, D_j)\in\Br(K)$. Therefore, from \ref{p:CritDesc} follows that
\[
	\lambda=\sigma((E_j)_{j=1}^q,(F_j)_{j=0}^q)=
	\pi | \kappa_{E_j} |\varrho=
	\pi | m | E_j\setminus\{m\}| \varrho
	  \in \Crit(\cW_K).
\]
	}
	\item{
	\textit{$m\not\in B_r$ for every $r$.} Then the condition (c) guarantees that $m\in F_q$. Let
	\[
		\lambda'=\sigma((E_j)_{j=1}^q, (F_0,F_1,\dots, F_{q-1}, F_q\setminus\{m\}));
	\]
	it is easy to check that $\lambda'\in \CrCell(K|^0_{A'})$. As above, from the inductive hypothesis and \ref{p:CritDesc} follows that $\lambda=\lambda'|m\in\Crit(\cW_K)$.
	}
	\end{itemize}
	
	Now assume that $\lambda\in \Crit(\cW_K)$. Again, by \ref{p:CritDesc} there are two cases:
	\begin{itemize}
	\item{
		\textit{$\lambda=\lambda'|m$ for $\lambda'\in\Crit(\cW_{K|^0_{A'}})$}. By the inductive hypothesis,
		 $\lambda'\in\CrCell(K|^0_{A'})$ and then $\lambda'=\sigma((E_j)_{j=1}^q,(F_j)_{j=0}^q)$ for $((E_j),(F_j))\in\CrSeq(K)$. Clearly,
	\[
		((E_j)_{j=1}^q, (F_0,\dots,F_{q-1},F_{q}\cup \{m\}))\in\CrSeq(K).
	\]		
		Thus, $\lambda\in\CrCell(K)$, since it is the critical cell of the sequence above.
	}
	\item{
		\textit{$\lambda=\pi|m|B|\varrho$ for $(C,B,D)\in\Br(K)$, $\pi\in \Crit(\cW_{K|^0_{C}})$, $\varrho\in\Crit(\cW_{K|^1_{D}})$}. By the inductive hypothesis,
\begin{align*}
	\pi&=\sigma((E^\pi_j)_{j=1}^q,(F^\pi_j)_{j=0}^q),\\
	\varrho&=\sigma((E^\varrho_j)_{j=1}^r,(F^\varrho_j)_{j=0}^r),
\end{align*}		
		for $((E^\pi_j),(F^\pi_j))\in\CrSeq(K|^0_C)$, $((E^\varrho_j),(F^\varrho_j))\in\CrSeq(K|^1_D)$. Now $\lambda$ is associated to a sequence
		\[
			((E_1^\pi,\dots,E_q^\pi,B\cup\{m\},E_1^\varrho,\dots,E_r^\varrho),(F_0^\pi,\dots,F_q^\pi,F_0^\varrho,\dots,F^\varrho_r)),
		\]
		which is critical in $K$; the only non-trivial fact to check is that either $E^\pi_q=\emptyset$ or $\max(E^\pi_q)<\max(B\cup\{m\})$, which is guaranteed since $m$ is a maximal element of $A$.
	}
	\end{itemize}
	We have shown that $\Crit(\cW_k)\cong\CrSeq(K)$ and it is clear that dimension is preserved.
\end{proof}

\begin{thm}\label{t:Main}
	Let $K$ be an $A$--cubical complex. Then $|\cP_K|\simeq \vP(|K|)_\bO^\bI$ is homotopy equivalent to a CW-complex $X_K$ that has exactly $\# \CrSeq^d(K)$ cells of dimension $d$.
\end{thm}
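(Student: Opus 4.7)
The theorem is essentially a corollary assembling the results established earlier, so my plan is to chain together the pieces rather than introduce new arguments.

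First, I would establish the homotopy equivalence $\vP(|K|)_\bO^\bI \simeq |\cP_K|$. The quoted Theorem 2.1(a) gives $\vP(|K|)_\bO^\bI \simeq |\Ch(K)_\bO^\bI|$, and Proposition \ref{p:PChIso} provides a poset isomorphism $\Ch(\square^A)_\bO^\bI \cong \cP_A$ which, by the very definition of $\cP_K$ (see \ref{e:PKDef}), restricts to an isomorphism $\Ch(K)_\bO^\bI \cong \cP_K$. Moreover, Theorem 2.1(b) endows $|\Ch(K)_\bO^\bI|$ with a regular CW-structure whose closed cells are indexed by the poset $\Ch(K)_\bO^\bI$ itself; transporting this structure along the isomorphism shows that $\cP_K$ is a CW-poset, which is the hypothesis required to apply the Discrete Morse Theory machinery of Section 4.

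Second, I would invoke Proposition \ref{p:WKIsGradient}, which asserts that $\cW_K$ is a gradient field on $\cP_K$. Applying Theorem \ref{t:Forman} (the cited Kozlov \cite[Thm.\ 11.13]{Kozlov}) produces a CW-complex $X_K := W(\cW_K)$ that is homotopy equivalent to $|\cP_K|$ and whose $d$-dimensional cells are in bijection with the $d$-dimensional cells of $\Crit(\cW_K)$.

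Third, I would apply Proposition \ref{p:ExplicitDescOfCrit}, which identifies $\Crit^d(\cW_K)$ with $\CrCell^d(K)$, and recall from Definition \ref{d:CrSeq} that there is a canonical bijection $\CrCell^d(K) \cong \CrSeq^d(K)$ preserving dimension. Chaining the three equivalences
\[
	\vP(|K|)_\bO^\bI \simeq |\Ch(K)_\bO^\bI| \cong |\cP_K| \simeq X_K
\]
and the dimension-preserving bijection $\Crit^d(\cW_K) \cong \CrSeq^d(K)$ yields the theorem.

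The proof contains no real obstacle, since every non-trivial step has already been carried out. The only point that merits care is verifying that $\cP_K$ genuinely inherits a CW-poset structure (not merely a subposet structure inside $\cP_A$) so that Theorem \ref{t:Forman} applies directly; this is handled by appealing to Theorem 2.1(b) through the Proposition \ref{p:PChIso} isomorphism, rather than by trying to check the CW-poset axiom for $\cP_K$ directly from its combinatorial definition.
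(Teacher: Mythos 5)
Your proposal is correct and follows essentially the same route as the paper's proof: combine the homotopy equivalence $\vP(|K|)_\bO^\bI\simeq|\cP_K|$ from Section 3, Proposition \ref{p:WKIsGradient}, Theorem \ref{t:Forman}, and Proposition \ref{p:ExplicitDescOfCrit}. The extra care you take in justifying that $\cP_K$ is a CW-poset (via Theorem 2.1(b) and the isomorphism of Proposition \ref{p:PChIso}) is a point the paper leaves implicit, but it does not change the argument.
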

\begin{proof}
	By \ref{p:WKIsGradient}, $\cW_K$ is a gradient field and by \ref{p:ExplicitDescOfCrit} the number of critical cells of dimension $d$ equals to $\# \CrSeq^d(K)$. The conclusion follows from \ref{t:Forman}.
\end{proof}

\section{Euclidean cubical complexes}

Euclidean cubical complexes \cite{RZ} constitute a class of semi-cubical sets which is especially important for applications in concurrency, since they include state spaces of PV-programs \cite{D,Z-PV} . We recall the definition here and show that Euclidean cubical complexes can be regarded as $A$--cubical complexes.

\begin{df}
	\emph{An elementary cube} in $\vR^n$ is a subset having the form
	\[
		[\ba,\bb]=\{\bx\in\vR^n:\; \ba\leq\bx\leq\bb\}=\{(x_1,\dots,x_n)\in \vR^n:\; \forall_{i=1}^n\; a_i\leq x_i\leq b_i\},
	\]
	where $\ba=(a_1,\dots,a_n),\bb=(b_1,\dots,b_n)\in \Z^n$ and $b_i-a_i\in\{0,1\}$ for all $i$. The integer $|\bb-\ba|=\sum_{i=1}^n (b_i-a_i)$ is \emph{the dimension} of the cube $[\bb,\bl]$. A set
	\[
		\dir([\ba,\bb])=\{i\in\{1,\dots,n\}:\; b_i-a_i=1\}
	\]
	is \emph{the set of directions} of the cube $[\ba,\bb]$.
	
	\emph{A Euclidean cubical complex} $K$ in $\vR^n$ is a family of elementary cubes in $\vR^n$ that is closed with respect to taking subsets. $K$ can be regarded as a semi-cubical set: $K[d]$ is the set of all $d$--dimensional elementary cubes of $K$, and the face maps are defined as follows. If $[\ba,\bb]\in K[d]$  and  $\dir([\ba,\bb])=\{r(1)<\dots<r(d)\}$, then
	\[
		d^\varepsilon_i([\ba,\bb])=[\ba+\varepsilon \be_{r(i)}, \bb-(1-\varepsilon)\be_{r(i)} ],
	\]
	where $\be_i=(0,\dots,0,1,0,\dots,0)$ and $1$ stands at the $i$--th place.
	This corresponds to taking the lower or the upper face in the $i$--th direction of the cube.
\end{df}

\begin{rem}
	The geometric realization of a Euclidean complex $K$ regarded as a semi-cubical set is homeomorphic, in a canonical way, with a sum of this cubes regarded as subsets of $\R^n$.
\end{rem}


Let $K$ be a finite Euclidean cubical complex. Without loss of generality, we can assume that $K$ is contained in a hyperrectangle $[\bO,\bk]$, $\bk=(k_1,\dots,k_n)\in\Z^n$, since $K$ can be shifted if necessary. Define a poset
\begin{equation}\label{e:Ak}
	A_\bk=\{(1,1)<(1,2)<\dots<(1,{k_1})<(2,1)<\dots<(2,{k_2})<\dots<(n,1)<\dots<(n,{k_n})\}
\end{equation}
For an elementary cube $[\ba,\bb]\subseteq [\bO,\bk]$ having dimension $d$, define $i_\bk([\ba,\bb])\in \square^{A_\bk}[d]$ by
\begin{equation}\label{e:EuclInc}
	i_\bk([\ba,\bb])(i,j)=\begin{cases}
		0 & \text{for $b_i< j$,}\\
		1 & \text{for $j\leq a_i$,}\\
		* & \text{for $a_i<j=b_i$.}
	\end{cases}
\end{equation}
This definition is valid since $b_i-a_i\in\{0,1\}$ for all $i$, and defines an injective semi-cubical map $[\bO,\bk]\to \square^{A_\bk}$; it is elementary to check that this commutes with the face maps. Denote
\begin{equation}
	\boxplus^\bk:=i_\bk([\bO,\bk])\subseteq \square^{A_\bk}.
\end{equation}
For an Euclidean cubical subcomplex $K\subseteq [\bO,\bk]$ obviously $i_\bk(K)\subseteq \boxplus^\bk \subseteq \square^{A_\bk}$. Thus, $K$  can be regarded as an $A_\bk$--cubical complex.

\begin{rem}
	Instead of the order $(\ref{e:Ak})$ we can use any order such that $(i,j)<(i,j')$ for $j<j'$. This leads to a different vector field $\cW_{i_\bk(K)}$, with possibly another set of critical cells.
\end{rem}

The following observation is elementary but will be frequently used
\begin{prp}\label{p:BoxDotCriterion}
	Let $\lambda=B_1|\dots|B_l\in \cP_{A_\bk}$. The following conditions are equivalent:
	\begin{enumerate}[\normalfont (a)]
	\item{$\lambda\in \cP_{{\boxplus}^\bk}$.}
	\item{For every $i\in\{1,\dots,n\}$ and $j<j'\in \{1,\dots,k_i\}$, if $(i,j)\in B_r$ and $(i,j')\in B_{r'}$, then $r<r'$.\qed}
	\end{enumerate}
\end{prp}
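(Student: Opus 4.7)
The plan is to unwind the definitions and reduce the claim to an elementary observation about the form of cubes in $\boxplus^\bk$.

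\medskip

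\textbf{Step 1: Characterize cubes of $\boxplus^\bk$.} From formula (\ref{e:EuclInc}), a cube $c \in \square^{A_\bk}$ lies in $\boxplus^\bk$ if and only if, for every $i \in \{1,\dots,n\}$, the sequence $(c(i,1), c(i,2), \dots, c(i,k_i))$ has the shape ``some $1$'s, followed by at most one $*$, followed by some $0$'s.'' Indeed, for any elementary cube $[\ba,\bb]$ in $[\bO,\bk]$ the $i$-th row of $i_\bk([\ba,\bb])$ has exactly this form with the $*$ (if present) at position $j = b_i = a_i + 1$; and any $c \in \square^{A_\bk}$ of that shape arises this way.

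\medskip

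\textbf{Step 2: Translate to the cube chain $\bc^\lambda$.} By definition, $\lambda = B_1|\dots|B_l$ belongs to $\cP_{\boxplus^\bk}$ iff every cube $c^\lambda_s = c(B_1 \cup \dots \cup B_{s-1},\, B_s,\, B_{s+1} \cup \dots \cup B_l)$ lies in $\boxplus^\bk$. If $(i,j) \in B_r$, then
\[
c^\lambda_s(i,j) = \begin{cases} 1 & \text{if } r < s, \\ * & \text{if } r = s, \\ 0 & \text{if } r > s. \end{cases}
\]

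\medskip

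\textbf{Step 3: (b) $\Rightarrow$ (a).} Fix $s$ and a direction $i$, and let $r(j)$ denote the block index of $(i,j)$. By (b), $r(1) < r(2) < \dots < r(k_i)$ is strictly increasing, so the sequence $c^\lambda_s(i,1), \dots, c^\lambda_s(i,k_i)$ consists of $1$'s at positions $j$ with $r(j) < s$, at most one $*$ at the unique $j$ (if any) with $r(j) = s$, and $0$'s at positions $j$ with $r(j) > s$. This is the shape identified in Step~1, so $c^\lambda_s \in \boxplus^\bk$.

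\medskip

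\textbf{Step 4: (a) $\Rightarrow$ (b).} Suppose (b) fails: there are $i$ and $j < j'$ with $(i,j) \in B_r$ and $(i,j') \in B_{r'}$ where $r \geq r'$. If $r = r'$, then $c^\lambda_r(i,j) = c^\lambda_r(i,j') = *$, so the $i$-th row of $c^\lambda_r$ contains two stars, violating Step~1. If $r > r'$, then $c^\lambda_{r'}(i,j) = 0$ while $c^\lambda_{r'}(i,j') = *$, so a $0$ precedes a $*$ in the $i$-th row of $c^\lambda_{r'}$, again violating Step~1. Either case contradicts (a), completing the proof.

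\medskip

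There is no real obstacle; this is a bookkeeping exercise matching the ``increasing coordinate'' description of elementary cubes in $[\bO,\bk]$ with the ordering of blocks in an ordered partition. The only point deserving care is handling the cases $r = r'$ and $r > r'$ separately in Step~4.
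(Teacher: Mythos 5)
Your proof is correct. The paper states this proposition with a \qed and no argument, treating it as elementary; your write-up is exactly the intended bookkeeping, namely identifying the cubes of $\boxplus^\bk$ as those whose $i$-th row has the shape ``$1$'s, at most one $*$, then $0$'s'' and checking both implications for the cubes $c^\lambda_s$ of the associated cube chain.
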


For a subset ${B}\subseteq A_\bk$, let $\bar B$ be a multiset that contains only elements from $\{1,\dots,n\}$, and every $i\in\{1,\dots,n\}$ is contained in $\bar{B}$ with multiplicity $\#\{j\in \{1,\dots,k_i\}:\; (i,j)\in {E} \}$. $\bar{B}$ is the image of ${B}$ under the projection $A_\bk\ni (i,j)\mapsto i\in  \{1,\dots,n\}$, with multiplicities preserved. In terms of characteristic functions, we have
\begin{equation}
	\chi_{\bar{B}}(i)=\sum_{j=1}^{k_i} \chi_{{B}}(i,j).
\end{equation}

\begin{prp}\label{p:barBIsaSet}
	If $\lambda=B_1|\dots|B_l\in \cP_{{\boxplus}^\bk}$, then, for every $r\in\{1,\dots,l\}$, $\bar{B}_r$ is a set. 
\end{prp}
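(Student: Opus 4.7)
The plan is to unfold the definition of the multiset $\bar B_r$ and derive a contradiction using the criterion already established in Proposition \ref{p:BoxDotCriterion}. By definition, an element $i\in\{1,\dots,n\}$ has multiplicity in $\bar B_r$ equal to $\#\{j\in\{1,\dots,k_i\}:\;(i,j)\in B_r\}$, so $\bar B_r$ fails to be a set precisely when some $i$ admits two distinct indices $j<j'$ with both $(i,j),(i,j')\in B_r$.

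Assume such $i,j,j'$ exist. Then $(i,j)$ and $(i,j')$ lie in the same block, so with the notation of Proposition \ref{p:BoxDotCriterion} we have $(i,j)\in B_r$ and $(i,j')\in B_r$, i.e.\ both indices equal $r$. Since $\lambda\in\cP_{\boxplus^\bk}$, the criterion forces $r<r$, a contradiction.

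Hence no element of $\{1,\dots,n\}$ appears with multiplicity greater than one in $\bar B_r$, which is exactly the statement that $\bar B_r$ is a set. There is no real obstacle here; the entire content is an immediate translation of the disjointness of the blocks $B_r$ combined with the order condition of Proposition \ref{p:BoxDotCriterion}, and the only care needed is to keep the bookkeeping of multiplicities straight when moving between the set $B_r\subseteq A_\bk$ and its image multiset $\bar B_r$.
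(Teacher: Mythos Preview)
Your argument is correct and is exactly what the paper's one-line proof (``This follows from Proposition \ref{p:BoxDotCriterion}'') is abbreviating: if $(i,j),(i,j')\in B_r$ with $j<j'$, the criterion forces $r<r$, which is absurd.
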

\begin{proof}
	This follows from Proposition \ref{p:BoxDotCriterion}.
\end{proof}

Let $[\bk]$ denote the multiset having characteristic function $\bk$, i.e., such that contains $i\in\{1,\dots,n\}$ exactly $k_i$ times.
\emph{An ordered partition of $[\bk]$} is a sequence $\mu=C_1|C_2|\dots|C_l$, where $C_i$ are multisets with all elements in $\{1,\dots,n\}$, such that $\sum_{i=1}^l \chi_{C_i}=\bk$. An ordered partition $\mu$ is \emph{proper} if all multisets $C_i$ are sets, i.e., $\chi_{C_i}\leq \bI$. Let $\cR_\bk$ be the poset of ordered partitions of $\bk$, ordered by refinement, and let $\cR_\bk^{pr}\subseteq \cR_\bk$ be the subposet of proper partitions.

\begin{prp}
	If $\lambda=B_1|\dots|B_l\in \cP_{{\boxplus}^\bk}$, then $\bar\lambda=\bar{B}_1|\dots|\bar{B}_l\in \cR_\bk^{pr}$.
\end{prp}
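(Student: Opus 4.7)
The plan is to verify the two conditions defining membership in $\cR_\bk^{pr}$: first that $\bar\lambda$ is an ordered partition of the multiset $[\bk]$, and second that this partition is proper. The properness is essentially already in hand from Proposition \ref{p:barBIsaSet}, so the main work is the first condition, which reduces to an interchange of summations.

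First I would check nonemptiness of each block: since $\lambda = B_1|\dots|B_l$ is an ordered partition of $A_\bk$, every $B_r$ is nonempty, and surjectivity of the projection $B_r \to \bar B_r$ gives $\bar B_r \neq \emptyset$.

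Next I would verify the multiset partition identity $\sum_{r=1}^l \chi_{\bar B_r} = \bk$. For each $i \in \{1,\dots,n\}$, using the definition of $\chi_{\bar B_r}$ and the fact that $\{B_1,\dots,B_l\}$ partitions $A_\bk$, one computes
\[
\sum_{r=1}^l \chi_{\bar B_r}(i) = \sum_{r=1}^l \sum_{j=1}^{k_i} \chi_{B_r}(i,j) = \sum_{j=1}^{k_i} \sum_{r=1}^l \chi_{B_r}(i,j) = \sum_{j=1}^{k_i} 1 = k_i,
\]
showing that $\bar\lambda$ is a well-defined ordered partition of $[\bk]$ in $\cR_\bk$.

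Finally, properness of $\bar\lambda$, i.e., that each $\bar B_r$ is a set rather than a strict multiset, is exactly the content of Proposition \ref{p:barBIsaSet}. Combining these observations gives $\bar\lambda \in \cR_\bk^{pr}$. There is no real obstacle here: the whole statement is a bookkeeping verification, and the only substantive input (the properness) has been separated out into the preceding proposition.
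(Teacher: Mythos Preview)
Your proposal is correct and matches the paper's own proof almost verbatim: the paper performs the same interchange of summations to verify $\sum_{r=1}^l \chi_{\bar B_r}(i)=k_i$ and then invokes Proposition~\ref{p:barBIsaSet} for properness. The only difference is that you explicitly mention nonemptiness of the $\bar B_r$, which the paper leaves implicit.
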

\begin{proof}
	For $i\in\{1,\dots,n\}$ we have
	\[
		\sum_{r=1}^l \chi_{\bar{B}_r} (i)= \sum_{r=1}^l \sum_{j=1}^{k_i} \chi_{B_r}(i,j)= \sum_{j=1}^{k_i}  \chi_{\bigcup_{r=1}^l B_r}(i,j)=\sum_{j=1}^{k_i}  \chi_{A_\bk}(i,j)=k_i.
	\]
	Thus, $\bar\lambda$ is an ordered partition of $[\bk]$, and by \ref{p:barBIsaSet} this is proper.
\end{proof}

\begin{prp}
	For every proper ordered partition $E_1|E_2|\dots|E_l$ of $[\bk]$ there exists a unique ordered partition $\lambda=B_1|\dots|B_l\in \cP_{{\boxplus}^\bk}$ such that $E_r=\bar{B}_r$ for all $r\in\{1,\dots,l\}$.
\end{prp}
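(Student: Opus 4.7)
The plan is to construct $\lambda$ explicitly from the combinatorial data of $E_1|\dots|E_l$, determining where each pair $(i,j)\in A_\bk$ must go, and then to verify that the result has the required properties and that no other choice works.

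First I would fix $i\in\{1,\dots,n\}$. Since $E_1|\dots|E_l$ is a \emph{proper} ordered partition of $[\bk]$ and $i$ has total multiplicity $k_i$ in $[\bk]$, there are exactly $k_i$ indices $r$ with $i\in E_r$; list them in increasing order as $r^{(i)}_1<\dots<r^{(i)}_{k_i}$. Then define $B_r\subseteq A_\bk$ by the rule that $(i,j)\in B_r$ if and only if $r=r^{(i)}_j$.

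Next I would check that $\lambda=B_1|\dots|B_l$ satisfies all requirements. Each $(i,j)\in A_\bk$ lies in a unique $B_r$, and every $B_r$ is non-empty because $E_r$ is; so $\lambda$ is an ordered partition of $A_\bk$. The criterion of Proposition \ref{p:BoxDotCriterion} is immediate from $r^{(i)}_1<\dots<r^{(i)}_{k_i}$: if $(i,j),(i,j')\in A_\bk$ with $j<j'$ lie in $B_r,B_{r'}$, then $r=r^{(i)}_j<r^{(i)}_{j'}=r'$. Hence $\lambda\in\cP_{\boxplus^\bk}$. For any $r$, $\bar B_r$ consists of those $i$ for which $r\in\{r^{(i)}_1,\dots,r^{(i)}_{k_i}\}$, i.e., those $i$ with $i\in E_r$, so $\bar B_r=E_r$.

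Finally I would address uniqueness. Given any $\lambda'=B'_1|\dots|B'_l\in\cP_{\boxplus^\bk}$ with $\bar B'_r=E_r$ for all $r$, the identity $\bar B'_r=E_r$ determines the set of blocks $B'_r$ containing a pair of the form $(i,\cdot)$ --- it is precisely $\{r^{(i)}_1,\dots,r^{(i)}_{k_i}\}$ --- and Proposition \ref{p:barBIsaSet} ensures that each such block contains exactly one. Thus the pairs $(i,1),\dots,(i,k_i)$ are distributed one each among $B'_{r^{(i)}_1},\dots,B'_{r^{(i)}_{k_i}}$, and Proposition \ref{p:BoxDotCriterion} forces $(i,j)\in B'_{r^{(i)}_j}$. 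Hence $\lambda'=\lambda$. The whole argument is essentially bookkeeping and I do not expect any genuine obstacle; the only subtlety worth highlighting is that combining $\bar B_r=E_r$ with the cube-compatibility criterion of Proposition \ref{p:BoxDotCriterion} leaves no freedom whatsoever in assigning the second coordinate.
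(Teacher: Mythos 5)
Your proposal is correct and follows essentially the same route as the paper: the paper defines $B_r=\{(i,j):\ i\in E_r,\ j=1+\#\{s<r:\ i\in E_s\}\}$, which is exactly your assignment $(i,j)\in B_{r^{(i)}_j}$ viewed from the perspective of $r$ instead of $i$, and the verification and uniqueness arguments (via Propositions \ref{p:BoxDotCriterion} and \ref{p:barBIsaSet}) coincide. Your write-up is, if anything, slightly more explicit about why no freedom remains in the second coordinate.
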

\begin{proof}
	Define
	\begin{equation}
		B_r=\{(i,j):\; \text{$i\in E_r$ and $j=1+\#\{s\in\{1,\dots,r-1\}:\; i\in E_s\}$} \}.
	\end{equation}
	Clearly, $\bar{B}_r=E_r$ and $\lambda=B_1|\dots|B_l$ is a partition of $A_\bk$, which satisfies the condition \ref{p:BoxDotCriterion}.(b). Thus, $\lambda\in \cP_{{\boxplus}^\bk}$. On the other hand, if $i\in E_r$, then $B_r$ must contain a pair $(i,j)$ and \ref{p:BoxDotCriterion}.(b) enforces that $j=1+\#\{s\in\{1,\dots,r-1\}\}$. 
\end{proof}

As a consequence, the formula
\begin{equation}
	U_\bk: \cP_{{\boxplus}^\bk}\ni \lambda=B_1|B_2|\dots|B_l\mapsto \bar{\lambda}=\bar{B}_1|\bar{B}_2|\dots|\bar{B}_l \in \cR_\bk.
\end{equation}
defines an isomorphism of posets. For a Euclidean cubical complex $K\subseteq [\bO,\bk]$ let 
\begin{equation}
	\cR_K=U_\bk(\cP_{i_\bk(K)}).
\end{equation}

\begin{prp}
	For a Euclidean cubical complex $K\subseteq [\bO,\bk]$, we have
	\[
		\cR_K:=\{E_1|\dots|E_l\in\cR^{pr}_{\bk}:\; \forall_{r\in\{1,\dots,l\}}\; [\sum_{s=1}^{r-1} \chi_{E_s},\sum_{s=1}^{r} \chi_{E_s}]\in K\}.
	\]
\end{prp}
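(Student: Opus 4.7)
My plan is to reduce the statement to verifying a single identification: that under the isomorphism $U_\bk$, the cubes $c^\lambda_r$ that witness membership in $\cP_{i_\bk(K)}$ correspond precisely to the elementary cubes $[\sum_{s<r}\chi_{E_s},\sum_{s\leq r}\chi_{E_s}]$ appearing in the claimed description of $\cR_K$.

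First, I would unravel definitions. By construction $\cR_K=U_\bk(\cP_{i_\bk(K)})$, and the preceding proposition identifies $U_\bk$ as a bijection $\cP_{\boxplus^\bk}\to \cR_\bk^{pr}$, so $\cR_K\subseteq \cR_\bk^{pr}$. Thus, given $\mu=E_1|\dots|E_l\in \cR_\bk^{pr}$ with preimage $\lambda=B_1|\dots|B_l=U_\bk^{-1}(\mu)\in \cP_{\boxplus^\bk}$, the statement $\mu\in\cR_K$ is equivalent to saying that each cube $c^\lambda_r=c(B_1\cup\dots\cup B_{r-1},B_r,B_{r+1}\cup\dots\cup B_l)$ lies in $i_\bk(K)$. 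Since $i_\bk$ is injective, it suffices to prove the key identity
\[
	c^\lambda_r=i_\bk\!\left(\left[\textstyle\sum_{s<r}\chi_{E_s},\sum_{s\leq r}\chi_{E_s}\right]\right).
\]

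For the identity, set $a_i=\sum_{s<r}\chi_{E_s}(i)=\#\{s<r:i\in E_s\}$ and $b_i=\sum_{s\leq r}\chi_{E_s}(i)$. Properness of $\mu$ forces $b_i-a_i\in\{0,1\}$, so $[\ba,\bb]$ is a legitimate elementary cube with $\dir([\ba,\bb])=E_r$. The main computation uses Proposition \ref{p:BoxDotCriterion}: for each fixed $i$, the pairs $(i,1),(i,2),\dots,(i,k_i)$ are distributed in strictly increasing order across the blocks $B_1,\dots,B_l$. Counting tells us that $(i,j)\in B_1\cup\dots\cup B_{r-1}$ iff $j\leq a_i$; that $(i,j)\in B_r$ iff $a_i<j\leq b_i$, which by properness forces $j=b_i=a_i+1$; and that $(i,j)\in B_{r+1}\cup\dots\cup B_l$ iff $j>b_i$. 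Comparing these three cases with the defining formula \eqref{e:EuclInc} for $i_\bk$ yields $c^\lambda_r(i,j)=i_\bk([\ba,\bb])(i,j)$ at every coordinate, proving the identity.

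Combining these: for $\mu\in \cR_\bk^{pr}$ and its preimage $\lambda$, membership $\mu\in\cR_K$ is equivalent to $c^\lambda_r\in i_\bk(K)$ for all $r$, which by the identity above and injectivity of $i_\bk$ is equivalent to $[\sum_{s<r}\chi_{E_s},\sum_{s\leq r}\chi_{E_s}]\in K$ for every $r\in\{1,\dots,l\}$. This is exactly the right-hand side of the proposition, so the two sets coincide. The only nontrivial ingredient is the coordinate-by-coordinate verification that $c^\lambda_r$ and $i_\bk([\ba,\bb])$ agree; this is where Proposition \ref{p:BoxDotCriterion} and the explicit formula for $U_\bk^{-1}$ do all the work, and I expect this to be the one place where the bookkeeping demands care.
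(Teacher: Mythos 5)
Your proposal is correct and follows essentially the same route as the paper: the paper's entire proof consists of asserting the key identity $c^\lambda_r=i_\bk\bigl(\bigl[\sum_{s<r}\chi_{\bar{B}_s},\sum_{s\leq r}\chi_{\bar{B}_s}\bigr]\bigr)$ and concluding, while you additionally carry out the coordinate-by-coordinate verification of that identity via Proposition \ref{p:BoxDotCriterion}, which the paper leaves implicit. No gaps.
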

\begin{proof}
	The definition of $i_\bk$ implies that
	\begin{equation}
		c(B_1\cup\dots\cup B_{r-1},B_r,B_{r+1}\cup\dots \cup B_l)=i_\bk([\sum_{s=1}^{r-1} \chi_{\bar{B}_s},\sum_{s=1}^{r} \chi_{\bar{B}_s}]).
	\end{equation}
	The conclusion follows.
\end{proof}

\begin{prp}\label{p:barEIsaSet}
	Assume that $(({E}_j)_{j=1}^q,({F}_j)_{j=0}^q)\in\CrSeq(i_\bk(K))$. Then, for all $j$, $\bar{E}_j$ is a set.
\end{prp}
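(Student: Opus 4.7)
The plan is to use the structure of the critical cell $\sigma := \sigma((E_j),(F_j))$ together with the combinatorial criterion given in Proposition \ref{p:BoxDotCriterion}. Fix $j$, write $E_j = \{b_1 < b_2 < \dots < b_l\}$ (with $l \geq 2$, since condition (d) of Definition \ref{d:CrSeq} guarantees $\#E_j \geq 2$), and note that by the definition of $\kappa_{E_j}$, the cell $\sigma$ contains two consecutive blocks, namely $\{b_l\}$ immediately followed by $\{b_1,\dots,b_{l-1}\}$. I need to show no two elements of $E_j$ project to the same first coordinate under $A_\bk \ni (i,k) \mapsto i$.

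First I would rule out repetitions \emph{within} $\{b_1,\dots,b_{l-1}\}$: this block is itself one of the $B_r$ of $\sigma \in \cP_{\boxplus^\bk}$, so Proposition \ref{p:barBIsaSet} immediately gives that $\overline{\{b_1,\dots,b_{l-1}\}}$ is a set. Therefore if $\bar{E}_j$ failed to be a set, the collision would have to be between $\bar{b}_l$ and some $\bar{b}_i$ with $i < l$, say $\bar{b}_l = \bar{b}_i = k$. Writing $b_i = (k,j_i)$ and $b_l = (k,j_l)$, the order on $A_\bk$ defined in (\ref{e:Ak}) forces $j_i < j_l$ because $b_i < b_l$ and their first coordinates coincide.

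The heart of the argument is now a direct appeal to Proposition \ref{p:BoxDotCriterion}: in $\sigma$, the pair $(k,j_l) = b_l$ lies in the block $\{b_l\}$ which appears \emph{before} the block $\{b_1,\dots,b_{l-1}\}$ containing $(k,j_i) = b_i$. But with $j_i < j_l$, the criterion demands the opposite order of blocks, contradicting $\sigma \in \cP_{\boxplus^\bk}$ (which holds because $\sigma \in \cP_{i_\bk(K)} \subseteq \cP_{\boxplus^\bk}$). This contradiction forces $\bar{E}_j$ to be a set.

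I do not foresee a real obstacle here; the whole proof is essentially a one-line observation once Propositions \ref{p:BoxDotCriterion} and \ref{p:barBIsaSet} are in hand. The only small subtlety is remembering that the definition $\kappa_B = b_l|\{b_1,\dots,b_{l-1}\}$ puts the maximum of $E_j$ in the \emph{earlier} block, which is precisely what makes the order comparison go the wrong way and yields the contradiction. Note that condition (c) of Definition \ref{d:CrSeq} is not needed for this particular claim.
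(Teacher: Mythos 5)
Your proof is correct and follows essentially the same route as the paper: both apply Proposition \ref{p:barBIsaSet} to the block $E_j\setminus\{\max E_j\}$ of $\kappa_{E_j}$ and then use Proposition \ref{p:BoxDotCriterion} to rule out a collision between $\max E_j$ and that block, deriving the same order contradiction. No gaps.
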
	
\begin{proof}
	The associated critical cell $\sigma(({E}_j),({F}_j))$ belongs to $\cP_{i_\bk(K)}$ and has the form $\lambda|(r,s)|E'_j|\mu$, where $(r,s)=\max(E_j)$, $E'_j=E_j\setminus(r,s)$. Then, by \ref{p:barBIsaSet}, $\bar{E}'_j$ is a set. If $\max(\bar{E}'_j)=r$, then $(r,s')\in E'_j$ for some $s'$. But \ref{p:BoxDotCriterion} implies that $s<s'$, which contradicts the assumption that $(r,s)=\max(E_j)$. Thus, $r>\max(\bar{E}'_j)$ and then $\bar{E}_j=\{r\}\cup \bar{E}'_j$ is a set.
\end{proof}

For $\ba\leq \bb\in\Z^n$, \emph{the minimal line from $\ba$ to $\bb$} is a Euclidean cubical complex $L_{\ba,\bb}$ such that
\begin{equation}
	|L_{\ba,\bb}|=\bigcup_{i=1}^n \{a_1\}\times \dots \times \{a_{i-1}\} \times [a_i,b_i] \times \{b_{i+1}\} \times\dots\times \{b_n\}\subseteq \R^n.
\end{equation}  

\begin{df}\label{d:Route}
	\emph{A route to $\bk$}, where $0\leq \bk\in \Z^n$, is a pair of sequences  $((\ba^j)_{j=1}^{q+1}, (\bb^j)_{j=0}^{q})$ of points of $\Z^n$ such that
	\begin{enumerate}[\normalfont (a)]
		\item{$\bb^0=\bO$}
		\item{$\ba^{q+1}=\bk$,}
		\item{$\bb^j\leq \ba^{j+1}$ for $0\leq j\leq q$,}
		\item{$\bO<\bb^j - \ba^j \leq\bI$ for $0<j\leq q$.}			
	\end{enumerate}
	Let $K\subseteq [\bO,\bk]$ be a Euclidean complex. \emph{A critical route in $K$} is a route to $\bk$ such that
	\begin{enumerate}[\normalfont (a)]
		\addtocounter{enumi}{4}
		\item{$L_{\bb^{j},\ba^{j+1}}\subseteq K$ for $0\leq j\leq q$,}
		\item{$[\ba^j,\bb^j]\not\in K$ for $0<j\leq q$,}
		\item{$[\ba^j,\ba^j+\be_{m_j}], [\ba^j+\be_{m_j},\bb_j]\in K$ for $0<j\leq q$, where $m_j=\max(\dir([\ba^j,\bb^j]))$.}
	\end{enumerate}	
\end{df}

The following picture illustrates all critical routes in an exemplary Euclidean cubical set.
\begin{equation}
\begin{tikzpicture}[scale=0.7]
	\fill[color=gray!20!white] 	(0,0) -- (5,0)--(5,4)--(0,4)--(0,0);
	\fill[color=white] 	(2,0) -- (3,0)--(3,1)--(2,1)--(2,0);
	\fill[color=white] 	(3,1) -- (4,1)--(4,2)--(3,2)--(3,1);
	\fill[color=white] 	(1,2) -- (2,2)--(2,4)--(1,4)--(1,2);

	\draw (0,0)--(2,0);
	\draw (3,0)--(5,0);
	\draw (0,1)--(5,1);
	\draw (0,2)--(5,2);
	\draw (0,3)--(1,3); \draw(2,3)--(5,3);
	\draw (0,4)--(5,4);

	\draw(0,0)--(0,4);	
	\draw(1,0)--(1,4);	
	\draw(2,0)--(2,4);	
	\draw(3,0)--(3,4);	
	\draw(4,0)--(4,4);	
	\draw(5,0)--(5,4);	

	\fill[color=red] (0,0) circle[radius=0.1];
	\node[left] at (0,0) {$b_0$};

	\fill[color=blue!30!green] (1,3) circle[radius=0.1];
	\node[right] at (1,3) {$a_1$};

	\fill[color=red] (2,4) circle[radius=0.1];
	\node[above] at (2,4) {$b_1$};

	\fill[color=blue!30!green] (5,4) circle[radius=0.1];
	\node[right] at (5,4) {$a_2$};
\end{tikzpicture}
\begin{tikzpicture}[scale=0.7]
	\fill[color=gray!20!white] 	(0,0) -- (5,0)--(5,4)--(0,4)--(0,0);
	\fill[color=white] 	(2,0) -- (3,0)--(3,1)--(2,1)--(2,0);
	\fill[color=white] 	(3,1) -- (4,1)--(4,2)--(3,2)--(3,1);
	\fill[color=white] 	(1,2) -- (2,2)--(2,4)--(1,4)--(1,2);

	\draw (0,0)--(2,0);
	\draw (3,0)--(5,0);
	\draw (0,1)--(5,1);
	\draw (0,2)--(5,2);
	\draw (0,3)--(1,3); \draw(2,3)--(5,3);
	\draw (0,4)--(5,4);

	\draw(0,0)--(0,4);	
	\draw(1,0)--(1,4);	
	\draw(2,0)--(2,4);	
	\draw(3,0)--(3,4);	
	\draw(4,0)--(4,4);	
	\draw(5,0)--(5,4);	

	\fill[color=red] (0,0) circle[radius=0.1];
	\node[left] at (0,0) {$b_0$};

	\fill[color=blue!30!green] (2,0) circle[radius=0.1];
	\node[right] at (2,0) {$a_1$};

	\fill[color=red!60!green] (3,1) circle[radius=0.1];
	\node[above] at (3,1) {$b_1=a_2$};

	\fill[color=red] (4,2) circle[radius=0.1];
	\node[above right] at (4,2) {$b_2$};

	\fill[color=blue!30!green] (5,4) circle[radius=0.1];
	\node[right] at (5,4) {$a_3$};
\end{tikzpicture}
\begin{tikzpicture}[scale=0.7]
	\fill[color=gray!20!white] 	(0,0) -- (5,0)--(5,4)--(0,4)--(0,0);
	\fill[color=white] 	(2,0) -- (3,0)--(3,1)--(2,1)--(2,0);
	\fill[color=white] 	(3,1) -- (4,1)--(4,2)--(3,2)--(3,1);
	\fill[color=white] 	(1,2) -- (2,2)--(2,4)--(1,4)--(1,2);

	\draw (0,0)--(2,0);
	\draw (3,0)--(5,0);
	\draw (0,1)--(5,1);
	\draw (0,2)--(5,2);
	\draw (0,3)--(1,3); \draw(2,3)--(5,3);
	\draw (0,4)--(5,4);

	\draw(0,0)--(0,4);	
	\draw(1,0)--(1,4);	
	\draw(2,0)--(2,4);	
	\draw(3,0)--(3,4);	
	\draw(4,0)--(4,4);	
	\draw(5,0)--(5,4);	

	\fill[color=red] (0,0) circle[radius=0.1];
	\node[left] at (0,0) {$b_0$};

	\fill[color=blue!30!green] (2,0) circle[radius=0.1];
	\node[right] at (2,0) {$a_1$};

	\fill[color=red!60!green] (3,1) circle[radius=0.1];
	\node[above right] at (3,1) {$b_1$};

	\fill[color=blue!30!green] (5,4) circle[radius=0.1];
	\node[right] at (5,4) {$a_2$};
\end{tikzpicture}
\end{equation}

We will prove that there is 1--1 correspondence between critical sequences in $i_\bk(K)$ and critical routes in $K$.

 For $(({E}_j)_{j=1}^q,({F}_j)_{j=0}^q)\in\CrSeq(i_\bk(K))$ define 
\begin{align}
	\ba^j&=\chi_{\bar F_0}+\chi_{\bar E_1}+\chi_{\bar F_1}+\dots+\chi_{\bar E_{j-1}}+\chi_{\bar F_{j-1}}\\
	\bb^j&=\chi_{\bar F_0}+\chi_{\bar E_1}+\chi_{\bar F_1}+\dots+\chi_{\bar E_{j-1}}+\chi_{\bar F_{j-1}}+\chi_{\bar E_j}. \notag
\end{align}
We will check that $((\ba^j)_{j=1}^{q+1}, (\bb^j)_{j=0}^{q})$ satisfies the conditions (a)-(g) of Definition \ref{d:Route}. Points (a) and (c) and obvious, and (b) follows from \ref{d:CrSeq}.(a), since $\chi_{\bar{A}_\bk}=\bk$. Points (e) and (g) follow from \ref{d:CrSeq}.(b) and the definitions of $\tau_{F_j}$ and $\kappa_{E_j}$ (\ref{e:TauKappa}), respectively. Finally, point (d) follows from \ref{p:barEIsaSet} and (f) follows from \ref{d:CrSeq}.(d). Thus, $((\ba^j),(\bb^j))$ is a critical route in $K$.

On the other hand, if $((\ba^j)_{j=1}^{q+1}, (\bb^j)_{j=0}^{q})$ is a critical route in $K$, then we define sequences of subsets of $A_\bk$: 
\begin{align}
	E_j &= \{(i, b^j_i):\; i\in \dir([\ba^j,\bb^j])\}=\{(i,r):\; i\in\{1,\dots,n\},\; a^{j}_i < r \leq b^j_i\}\\
	F_j &= \{(i,r):\; i\in\{1,\dots,n\},\; b^{j-1}_i < r \leq a^j_i\}.	
\end{align}
The argument similar as above shows that $((E_j)_{j=1}^q, (F_j)_{j=0}^q)$ is a critical sequence in an $A_\bk$--cubical complex $i_\bk(K)$.

Both constructions preserve the dimension. Let $\Rt^d(K)$ be the set of critical routes in $K$ having dimension $d$. By combining the argument above with Proposition \ref{p:ExplicitDescOfCrit}, we obtain

\begin{prp}\label{p:ExplicitEu}
	Let $K\subseteq [\bO,\bk]$ be a Euclidean cubical complex. For every $d\geq 0$ there are bijections
	\[
		\Crit^d_{\cP_{i_\bk(K)}}(\cW_{i_\bk(K)}) \simeq \CrSeq^d(i_\bk(K)) \simeq \Rt^d(K).\qed
	\]
\end{prp}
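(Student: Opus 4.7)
The plan is to split the chain of bijections into its two pieces. The first bijection $\Crit^d_{\cP_{i_\bk(K)}}(\cW_{i_\bk(K)}) \simeq \CrSeq^d(i_\bk(K))$ is a direct instance of Proposition \ref{p:ExplicitDescOfCrit} applied to the $A_\bk$--cubical complex $i_\bk(K)$, so no additional work is needed there. The substantive task is the second bijection $\CrSeq^d(i_\bk(K)) \simeq \Rt^d(K)$, which I would establish by exhibiting the pair of mutually inverse dimension-preserving maps already sketched in the paragraphs that precede the proposition statement.

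In the forward direction, given $((E_j),(F_j)) \in \CrSeq(i_\bk(K))$, I would set $\ba^j$ and $\bb^j$ by the stated cumulative-sum formulas and then verify axioms (a)--(g) of Definition \ref{d:Route} one at a time. Items (a)--(c) fall out directly from the cumulative-sum definitions together with $\chi_{\bar A_\bk} = \bk$. Item (d) is where Proposition \ref{p:barEIsaSet} enters, guaranteeing that $\bar E_j$ is a nonempty \emph{set}, so that $\bO < \bb^j - \ba^j = \chi_{\bar E_j} \leq \bI$. Items (e) and (g) are translations of Definition \ref{d:CrSeq}(b) applied, respectively, to the $\tau_{F_j}$ blocks and to the two sub-cubes making up each $\kappa_{E_j}$ factor, read across the poset isomorphism $U_\bk$; and (f) is a direct transcription of Definition \ref{d:CrSeq}(d).

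In the reverse direction, I would define $E_j, F_j \subseteq A_\bk$ by picking out, coordinate by coordinate, the pairs $(i,r)$ in the ranges $a_i^j < r \leq b_i^j$ and $b_i^{j-1} < r \leq a_i^j$ respectively. The same axiom-matching, read backwards, takes care of Definition \ref{d:CrSeq}(a), (b), and (d). The most delicate clause is (c), namely $\max F_{j-1} < \max E_j$: under the chosen lex ordering of $A_\bk$ this reduces to a comparison between the largest direction in which $\ba^j - \bb^{j-1}$ moves and $m_j = \max\dir([\ba^j,\bb^j])$, and should follow from Definition \ref{d:Route}(g) combined with how $A_\bk$ is ordered (indices $(i,\cdot)$ are consecutive and grouped by $i$). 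The two constructions are inverse by inspection, and the dimension identity $\sum_j(\#E_j - 2) = \sum_j(|\bb^j-\ba^j|-2)$ is immediate from $\#E_j = \#\bar E_j = |\bb^j-\ba^j|$ via Proposition \ref{p:barEIsaSet}.

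I expect the main obstacle to be the clean handling of Definition \ref{d:CrSeq}(c) in the reverse direction, since it is the one clause without a literal counterpart in Definition \ref{d:Route}. Every other clause matches pointwise with an axiom of critical routes, and the dimension count together with the inverse property are essentially formal consequences of the partial-sum definitions.
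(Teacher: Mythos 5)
Your decomposition --- quoting Proposition \ref{p:ExplicitDescOfCrit} for the first bijection and building the second from the two explicit dimension-preserving maps --- is exactly the paper's argument, and your axiom-matching in the forward direction agrees with the paper's own. The problem is precisely the step you defer: condition (c) of Definition \ref{d:CrSeq} does \emph{not} follow from Definition \ref{d:Route}(g) together with the ordering of $A_\bk$. Under $U_\bk$, condition (c) translates into the requirement that the connecting segment from $\bb^{j-1}$ to $\ba^{j}$ move only in directions $\leq m_j=\max\dir([\ba^j,\bb^j])$, and nothing in (a)--(g) forces this. Concretely, take $n=3$, $\bk=(1,1,1)$, and let $K$ consist of all proper faces of the cube except the top square $T=[(0,0,1),(1,1,1)]$. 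Then $\bb^0=(0,0,0)$, $\ba^1=(0,0,1)$, $\bb^1=\ba^2=(1,1,1)$ satisfies all of (a)--(g): here $m_1=2$, and the two edges required by (g) as well as the vertical edge $L_{\bb^0,\ba^1}$ all lie in $K$. So this is a critical route of dimension $0$; but the associated pair has $F_0=\{(3,1)\}$ and $E_1=\{(1,1),(2,1)\}$, so $\max F_0>\max E_1$ and it is not a critical sequence. For this $K$ one checks $\Crit^0(\cW_{i_\bk(K)})=\{(1,1)|(2,1)|(3,1)\}$, a single cell, while there are two critical routes of dimension $0$ (this one and the trivial route), so the counts genuinely disagree. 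The reverse map is therefore not even well defined as the definitions stand; the cure is to add the missing clause to Definition \ref{d:Route} (equivalently, $\max\dir([\bb^{j-1},\ba^{j}])\leq m_j$ whenever $\bb^{j-1}\neq\ba^{j}$), after which (c) holds by construction rather than by deduction.

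A second, smaller mismatch occurs at clause (e), which you describe as a direct translation of Definition \ref{d:CrSeq}(b) applied to the $\tau_{F_j}$ blocks. It is not: $\tau_{F_j}$ traverses $F_j$ in increasing order of $A_\bk$, which under $U_\bk$ is the staircase from $\bb^j$ to $\ba^{j+1}$ that increases the \emph{smallest} coordinate first, whereas $L_{\ba,\bb}$ as defined in the paper increases the \emph{largest} coordinate first. These are different edge paths, and $K$ may contain one but not the other, so the forward map can also fail to land in $\Rt(K)$. Both defects are repairable by adjusting Definition \ref{d:Route} and the orientation of $L$, but as written your proof --- like the paper's own one-line appeal to ``the argument above'' --- does not close; the translation of clauses (c) and (e) is exactly where the content of the proposition lies and it has to be carried out (or the definitions amended) explicitly.
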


Here follows the main theorem of this Section.

\begin{thm}\label{t:MainEuclidean}
	Let $\bk\in \Z^n_{\geq 0}$ and let $K\subseteq [\bO,\bk]$ be a Euclidean cubical complex. Then $\vP(|K|)_\bO^\bk$ is homotopy equivalent to a CW-complex $X_K$ which has exactly $\#\Rt^d(K)$ cells of dimension $d$.
\end{thm}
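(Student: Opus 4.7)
The plan is simply to assemble the three ingredients already established in the paper: the embedding $i_\bk$, the main Morse-theoretic Theorem \ref{t:Main}, and the explicit bijection of Proposition \ref{p:ExplicitEu}.

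First I would show that passing from $K$ to the $A_\bk$-cubical complex $i_\bk(K)$ does not change the relevant path space. The map $i_\bk$ is an injective semi-cubical map $K\to \boxplus^\bk\subseteq \square^{A_\bk}$ whose geometric realization, after identifying $|\square^{A_\bk}|$ with $\vI^{A_\bk}$, corresponds to the canonical d-homeomorphism $[\bO,\bk]\cong \vI^{A_\bk}$ that stretches each coordinate axis by the integers $0<1<\dots<k_i$. Under this d-homeomorphism, the vertex $\bO\in K$ maps to $\bO\in \square^{A_\bk}$ and the vertex $\bk\in K$ maps to $\bI\in \square^{A_\bk}$. Since the d-structures on $|K|$ and $|i_\bk(K)|$ are intertwined by this homeomorphism, we obtain an identification
\[
    \vP(|K|)_\bO^\bk \;\cong\; \vP(|i_\bk(K)|)_\bO^\bI.
\]

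Next, apply Theorem \ref{t:Main} to the $A_\bk$-cubical complex $i_\bk(K)$. This produces a CW-complex, which I will call $X_K$, that is homotopy equivalent to $\vP(|i_\bk(K)|)_\bO^\bI$ and whose number of $d$-dimensional cells equals $\#\CrSeq^d(i_\bk(K))$. Composing with the equivalence above yields
\[
    \vP(|K|)_\bO^\bk \;\simeq\; X_K.
\]

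Finally, Proposition \ref{p:ExplicitEu} provides a bijection $\CrSeq^d(i_\bk(K))\cong \Rt^d(K)$, so the number of $d$-cells of $X_K$ equals $\#\Rt^d(K)$, as required. The only non-formal step is the d-homeomorphism identification in the first paragraph, but this is immediate from the definition (\ref{e:EuclInc}) of $i_\bk$: the preimage of each elementary cube $[\ba,\bb]\subseteq [\bO,\bk]$ under the rescaling $\vI^{A_\bk}\to [\bO,\bk]$ is precisely the geometric cube corresponding to $i_\bk([\ba,\bb])\in \square^{A_\bk}$, with face maps matched on the nose. There is no substantial obstacle beyond unwinding definitions.
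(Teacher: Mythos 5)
Your proposal is correct and follows exactly the paper's argument, which simply combines Theorem \ref{t:Main} applied to $i_\bk(K)$ with the bijection of Proposition \ref{p:ExplicitEu}. The extra detail you supply about the d-homeomorphism $\vP(|K|)_\bO^\bk \cong \vP(|i_\bk(K)|)_\bO^\bI$ is a point the paper leaves implicit, and your justification of it is sound.
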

\begin{proof}
	This follows from Theorem \ref{t:Main} and Proposition \ref{p:ExplicitEu}.
\end{proof}

\section{Applications}

At the first glance, it is not clear how efficient is the description of the space of directed paths on an $A$--cubical complex provided in Theorems \ref{t:Main} and \ref{t:MainEuclidean}. In this Section we describe three cases in which this description is optimal, i.e., the cells of the CW-complex $X_K$ correspond to the generators of the homology of $\vP(|K|)_\bO^\bI$.

\subsection*{"Not $(s+1)$--equal configuration spaces"}
For integers $0<s<n$, the "not $(s+1)$-equal" configuration space is 
\begin{equation}
		\Conf_{n,s}(\R)=\{(t_i)_{i=1}^n\in \R^n:\; \forall_{t\in\R}\; \#\{i:\; t_i=t\}\leq s\}.
\end{equation}
Homology groups of these spaces were calculated by Bj{\"o}rner and Welker \cite{BW}. We will reprove their result here. 

\emph{The $q$-skeleton} of a semi-cubical complex $K$ is a cubical complex $K_{(q)}$ such that
\begin{equation}
	K_{(q)}[d]=\begin{cases}
		K[d] & \text{for $d\leq q$}\\
		\emptyset & \text{for $d>q$.}
	\end{cases}
\end{equation}
and the face maps of $K_{(q)}$ are inherited from $K$. 

Fix $n>0$ and let $A=\{1<2<\dots<n\}$; we will write $\square^n$ instead of $\square^{\{1<2<\dots<n\}}$.

\begin{prp}\label{p:ListSpac}
	For $0<s\leq n$, the following spaces are homotopy equivalent:
	\begin{enumerate}[\normalfont (a)]
		\item{$\vP(|\square^n_{(s)}|)_\bO^\bI$,}
		\item{$|\cP_{\square^n_{(s)}}|$,}
		\item{$\Conf_{n,s}(\R)$.	}
	\end{enumerate}
\end{prp}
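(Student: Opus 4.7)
The equivalence $(a)\simeq(b)$ is immediate from the paper's preliminary results. Applying the theorem of Section~2 ($\vP(|K|)_v^w\simeq|\Ch(K)_v^w|$) to the proper complex $K=\square^n_{(s)}$, together with Proposition~\ref{p:PChIso} and the definition~(\ref{e:PKDef}), gives
\[
\vP(|\square^n_{(s)}|)_\bO^\bI\;\simeq\;|\Ch(\square^n_{(s)})_\bO^\bI|\;\simeq\;|\cP_{\square^n_{(s)}}|.
\]
By definition of the $s$-skeleton, $\cP_{\square^n_{(s)}}$ is the subposet of $\cP_{\{1,\dots,n\}}$ consisting of ordered partitions $B_1|\dots|B_l$ with $|B_i|\leq s$ for every $i$, since $\dim c^\lambda_i=|B_i|$ and $c^\lambda_i\in\square^n_{(s)}$ precisely when $|B_i|\leq s$.

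For $(b)\simeq(c)$, I would exploit the stratification of $\R^n$ by ``order type'': declare $(t_1,\dots,t_n)\sim(t'_1,\dots,t'_n)$ if for all $i,j$ one has $t_i<t_j\Leftrightarrow t'_i<t'_j$ and $t_i=t_j\Leftrightarrow t'_i=t'_j$. The equivalence classes $\sigma_\lambda$ are indexed by ordered partitions $\lambda=B_1|\dots|B_l$ of $\{1,\dots,n\}$ (where $B_j$ consists of the indices attaining the $j$-th smallest coordinate), and each $\sigma_\lambda$ is homeomorphic to $\{(s_1,\dots,s_l):s_1<\dots<s_l\}\subset\R^l$, hence contractible. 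The strata contained in $\Conf_{n,s}(\R)$ are precisely those with every $|B_j|\leq s$, i.e.\ those with $\lambda\in\cP_{\square^n_{(s)}}$. A direct inspection shows $\sigma_\mu\subseteq\overline{\sigma_\lambda}$ iff $\mu$ is coarser than $\lambda$, so the face poset of the induced stratification of $\Conf_{n,s}(\R)$ is $\cP_{\square^n_{(s)}}^{\mathrm{op}}$.

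To extract a genuine homotopy equivalence with $|\cP_{\square^n_{(s)}}|$, I would pass to a compact model: using an order-preserving homeomorphism $\R\cong(-1,1)$ together with a deformation retract keeping configurations inside a strictly smaller cube, one obtains a compact $Y\simeq\Conf_{n,s}(\R)$ in which each stratum closure becomes a convex polyhedral ball. This endows $Y$ with a regular CW structure whose face poset is $\cP_{\square^n_{(s)}}^{\mathrm{op}}$, and Bjorner's theorem on CW-posets gives $Y\cong|\cP_{\square^n_{(s)}}^{\mathrm{op}}|=|\cP_{\square^n_{(s)}}|$, the last equality being the standard observation that a poset and its opposite share the same order complex.

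The main obstacle is making this compactification precise: one must verify that the chosen retraction respects the stratification and that each closed cell is genuinely a ball of the expected dimension, so that Bjorner's theorem applies. A cleaner alternative would be to construct directly a continuous comparison map $|\cP_{\square^n_{(s)}}|\to\Conf_{n,s}(\R)$ (say, sending a chain $\lambda_0<\dots<\lambda_k$ into $\sigma_{\lambda_k}$ via barycentric coordinates on the corresponding open simplex in $\{s_1<\dots<s_{l(\lambda_k)}\}$) and to verify it is a homotopy equivalence by the nerve lemma applied to contractible thickenings of the strata.
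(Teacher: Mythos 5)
Your treatment of (a)$\simeq$(b) is exactly what the paper relies on (the theorem quoted in Section 2 plus Proposition \ref{p:PChIso} and (\ref{e:PKDef})), and your identification of $\cP_{\square^n_{(s)}}$ with the ordered partitions all of whose blocks have at most $s$ elements is correct. For (b)$\simeq$(c) the paper simply cites \cite[Section 2.2]{MR}, so you are supplying your own argument there, and the main route you propose has a genuine gap rather than just a technical loose end.

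The gap is the claim that (after compactifying) the order-type stratification gives a \emph{regular CW structure} on a model of $\Conf_{n,s}(\R)$ with face poset $\cP_{\square^n_{(s)}}^{\mathrm{op}}$. This cannot work: $\cP_{\square^n_{(s)}}$ is closed under refinement, so the allowed strata form an \emph{up-set} in the face poset of the stratification, i.e.\ an open union of strata, and the closure of an allowed stratum contains forbidden strata (coarsenings with a block of size $>s$) that lie outside $\Conf_{n,s}(\R)$; no retraction into a smaller cube removes them. Concretely, for $n=3$, $s=2$ the closure of $\sigma_{1|2|3}$ inside $\Conf_{3,2}(\R)$ is $\{t_1\leq t_2\leq t_3\}$ minus the deleted diagonal $t_1=t_2=t_3$, which is not a closed ball; correspondingly $\lvert(\cP_{\square^3_{(2)}}^{\mathrm{op}})_{<1|2|3}\rvert\cong S^0$ while the stratum has codimension $0$, so the dimensions required by Bj\"orner's theorem cannot match and the theorem does not apply. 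The statement you actually need is the dual one: for an open union of strata indexed by an up-set $Q$ of the face poset of a polyhedral stratification, the union deformation retracts onto the order complex $|Q|$, realized inside the union via barycenters of the strata (equivalently, via the subcomplex of the barycentric subdivision spanned by $Q$); since $|Q|=|Q^{\mathrm{op}}|$, this yields $\Conf_{n,s}(\R)\simeq|\cP_{\square^n_{(s)}}|$. That is essentially your closing ``cleaner alternative'' (note that a chain $\lambda_0<\dots<\lambda_k$ should be sent into the stratum of the \emph{finest} partition $\lambda_0$, not $\lambda_k$), but it is only gestured at, so as written the proof of (b)$\simeq$(c) is not complete.
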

\begin{proof}
	This is a consequence of Section 2 and  \cite[Section 2.2]{MR}.
\end{proof}

The space $\vP(|\square^n_{(s)}|)_\bO^\bI$ plays an important role in concurrency, since it is the execution space of a PV-program consisting of $n$ processes and each of then uses once a resource having capacity $s$.

\begin{prp}\label{p:CrSeqNotSEqual}
	Fix $s>1$. Then $\CrSeq^d(\square^n_{(s)})=\emptyset$ if $(s-1)$ does not divide $d$; otherwise,
	\[
		\CrSeq^{q(s-1)}(\square^n_{(s)})=\{((E_j)_{j=1}^{q},(F_j)_{j=0}^q):\; \forall_j\; \#E_j=s+1 \text{ and } \max(F_{j-1})<\max(E_j)\},
	\]
	where $E_1\cupdot \dots \cupdot E_q\cupdot F_0\cupdot \dots \cupdot F_q=\{1,\dots,n\}$.
\end{prp}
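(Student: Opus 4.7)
The plan is to specialize Definition \ref{d:CrSeq} to $K = \square^n_{(s)}$ and exploit the dimensional cut-off of the $s$-skeleton to pin down $\#E_j$ exactly. Once this is done, the dimension formula and the remaining conditions of Definition \ref{d:CrSeq} translate directly into the stated description.

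First, I would analyze condition (b) of Definition \ref{d:CrSeq}. The associated critical cell decomposes as
\[
\sigma((E_j),(F_j)) = \tau_{F_0}\,|\,\kappa_{E_1}\,|\,\tau_{F_1}\,|\,\kappa_{E_2}\,|\,\dots\,|\,\kappa_{E_q}\,|\,\tau_{F_q},
\]
and by (\ref{e:TauKappa}) each block is either a singleton (coming from some $\tau_{F_j}$ or from the leading element of some $\kappa_{E_j}$) or of the form $E_j\setminus\{\max(E_j)\}$ of size $\#E_j - 1$. An ordered partition belongs to $\cP_{\square^n_{(s)}}$ precisely when every block $B_i$ satisfies $\#B_i \leq s$, since the cube $c^\lambda_i$ has dimension $\#B_i$ and the $s$-skeleton contains exactly the cubes of dimension at most $s$. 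Singleton blocks are always admissible, so condition (b) reduces to $\#E_j - 1 \leq s$, i.e., $\#E_j \leq s+1$.

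Next, condition (d) asserts $c(C_j, E_j, D_j) \notin K$. Since this cube has dimension $\#E_j$ and $K = \square^n_{(s)}$, this is equivalent to $\#E_j > s$, i.e., $\#E_j \geq s+1$. Combining with the previous inequality forces $\#E_j = s+1$ for every $j\in\{1,\dots,q\}$. The dimension of a critical sequence is therefore
\[
\sum_{j=1}^q (\#E_j - 2) = q(s-1),
\]
so $\CrSeq^d(\square^n_{(s)})$ is empty whenever $(s-1)\nmid d$.

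When $d = q(s-1)$, critical sequences are in bijection with tuples $((E_j)_{j=1}^q,(F_j)_{j=0}^q)$ satisfying condition (a) (a decomposition of $\{1,\dots,n\}$), the size constraint $\#E_j = s+1$, and condition (c), which reads ``$F_{j-1}=\emptyset$ or $\max(F_{j-1})<\max(E_j)$'' and is subsumed by the statement's inequality under the convention $\max(\emptyset)=-\infty$. These are precisely the conditions appearing in the proposition. I do not foresee a substantive obstacle here; the argument is a pure translation of Definition \ref{d:CrSeq} through the two observations that $\square^n_{(s)}$ is closed under faces and contains a cube iff its dimension is at most $s$, and that the only place where a set $E_j$ appears as a single block of size larger than $1$ is inside $\kappa_{E_j}$.
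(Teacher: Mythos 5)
Your proof is correct and follows essentially the same route as the paper: condition (b) of Definition \ref{d:CrSeq} forces $\#E_j-1\leq s$ because the block $E_j\setminus\{\max(E_j)\}$ of $\kappa_{E_j}$ corresponds to a cube of dimension $\#E_j-1$ that must lie in the $s$-skeleton, while condition (d) forces $\#E_j>s$ since $c(C_j,E_j,D_j)$ has dimension $\#E_j$. The paper's proof is just these two observations stated tersely; your write-up supplies the same argument with the bookkeeping made explicit.
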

\begin{proof}
	The condition (b) in Definition \ref{d:CrSeq} implies that $\# E_j\leq s+1$, and the condition (d) implies that $\#E_j>s$ for all $j$.
\end{proof}

As a consequence, we obtain

\begin{prp}\label{p:CnsCritical}
	For $0<s\leq n$, $\vP(\square^n_{(s)})_\bO^\bI$ is homotopy equivalent to a CW-complex that has exactly $\# \CrSeq^d(\square^n_{(s)})$ cells of dimension $d$. If $s>2$, then
	\[
		H_d(\vP(\square^n_{(s)})_\bO^\bI)=\begin{cases}
			\Z^{b(n,s,q)} & \text{for $d=q(s-1)$,}\\
			0 & \text{otherwise,}
		\end{cases}
	\]
	where $b(n,s,q):=\#\Crit^{q(s-1)}(\square^n_{(s)})$.
\end{prp}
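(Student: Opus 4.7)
The first assertion is a direct application of Theorem~\ref{t:Main} to the $A$--cubical complex $K=\square^n_{(s)}$, where $A=\{1<2<\dots<n\}$: there is a CW-complex $X_K\simeq \vP(|\square^n_{(s)}|)_\bO^\bI$ whose $d$-dimensional cells are in bijection with $\CrSeq^d(\square^n_{(s)})$. The equivalence $\vP(|\square^n_{(s)}|)_\bO^\bI\simeq\Conf_{n,s}(\R)$ from Proposition~\ref{p:ListSpac} then gives a cellular model of the configuration space.

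For the homology computation, the plan is to use Proposition~\ref{p:CrSeqNotSEqual}, which classifies critical sequences in $\square^n_{(s)}$. That proposition shows that $\CrSeq^d(\square^n_{(s)})$ is empty unless $d=q(s-1)$ for some integer $q\geq 0$, since every block $E_j$ of a critical sequence must have cardinality exactly $s+1$ (the lower bound $\#E_j>s$ comes from the requirement $c(C_j,E_j,D_j)\notin K=\square^n_{(s)}$, i.e.\ the cube indexed by $E_j$ must exceed the $s$-skeleton; the upper bound $\#E_j\leq s+1$ follows because the associated critical cell $\sigma((E_j),(F_j))$ contains the face $\kappa_{E_j}$ of dimension $\#E_j-2$, which must belong to $K=\square^n_{(s)}$). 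Consequently, the cellular chain complex $C_*(X_K)$ has $C_d(X_K)\cong \Z^{b(n,s,q)}$ when $d=q(s-1)$ and $C_d(X_K)=0$ otherwise.

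The key observation for the vanishing of higher-order data is purely dimensional: the cellular boundary is a map $\partial_d:C_d(X_K)\to C_{d-1}(X_K)$. Assume $s>2$, so $s-1\geq 2$. If $C_d(X_K)\neq 0$, then $d=q(s-1)$ for some $q\geq 0$, and $d-1=q(s-1)-1$ cannot be of the form $q'(s-1)$ (since $s-1\geq 2$ and $q(s-1)-1\not\equiv 0\pmod{s-1}$). Thus $C_{d-1}(X_K)=0$, forcing $\partial_d=0$ for every $d$. The cellular chain complex has zero differentials, so $H_d(\vP(|\square^n_{(s)}|)_\bO^\bI)\cong H_d(X_K)\cong C_d(X_K)$, which is $\Z^{b(n,s,q)}$ for $d=q(s-1)$ and $0$ otherwise.

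There is no serious obstacle here: everything hinges on Proposition~\ref{p:CrSeqNotSEqual} (whose proof only requires unpacking Definition~\ref{d:CrSeq}) together with the elementary arithmetic fact that non-zero chain groups sit in dimensions with gap $s-1\geq 2$. The only mildly delicate point, which must be recorded, is the exclusion of the case $s=2$: there the gap between consecutive non-zero chain groups is $1$ and the boundary need not vanish, so one cannot conclude the homology from the cell count alone.
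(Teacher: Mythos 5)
Your proposal is correct and follows essentially the same route as the paper: the cell count comes from Theorem~\ref{t:Main} together with Proposition~\ref{p:CrSeqNotSEqual}, and the homology computation is the same dimensional-gap argument (non-zero chain groups only in dimensions divisible by $s-1\geq 2$, hence vanishing differentials). The paper's proof is terser, but your added detail --- including the explicit justification of $s<\#E_j\leq s+1$ and the remark isolating $s=2$ as the case where the argument breaks down --- matches the paper's intent (the paper indeed defers $s=2$ to an incidence-number computation it does not carry out).
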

\begin{proof}
	The first statement follows from \ref{t:Main} and \ref{p:CrSeqNotSEqual}. If $s>1$, no cells having consecutive dimensions appear, which implies the second statement.
\end{proof}

For $s=2$, a calculation of the homology groups requires checking that the incidence numbers of cells having consecutive dimensions always vanish. This can be done using methods from, for example, \cite[Chapter 11]{Kozlov}. We omit these technical calculations here.

\subsection*{Generalized not $(s+1)$--equal configuration spaces.} 

For $\bk=(k_1,\dots,k_n)\in \Z_+^n$ define a generalized "not $(s+1)$--equal" configuration space:
\begin{equation}
	\Conf_{\bk,s}(\R)=\{(t_i^j)_{i=1,\dots,n}^{j=1,\dots,k_i}:\; \forall_i\; t_i^1<\dots<t_i^{k_i} \text{ and } \forall_{t\in\R}\; \#\{(i,j):\; t^j_i=t\}\leq s\}.
\end{equation}
 This is a generalization of "not $(s+1)$--equal" configuration spaces, since $\Conf_{n,s}(\R)=\Conf_{(1,\dots,1),s}(\R)$.

The following proposition is an analogue of \ref{p:ListSpac}, and its proof is similar.
\begin{prp}
	For $\bk\in\Z_+^n$, the following spaces are homotopy equivalent:
	\begin{enumerate}[\normalfont (a)]
		\item{$\vP([\bO,\bk]_{(s)})_\bO^\bk$,}
		\item{$\vP(|{\boxplus^\bk_{(s)}}|)_\bO^\bI$,}
		\item{$|\cP_{{\boxplus}^\bk_{(s)}}|$,}
		\item{$\Conf_{\bk,s}(\R)$.\qed}
	\end{enumerate}
\end{prp}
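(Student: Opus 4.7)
The plan is to establish the three equivalences (a)$\simeq$(b), (b)$\simeq$(c), and one of (a)(b)(c)$\simeq$(d), largely mirroring the proof of Proposition \ref{p:ListSpac}.

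First, the equivalence (a)$\simeq$(b) is in fact a d-homeomorphism. The map $i_\bk$ from (\ref{e:EuclInc}) is by construction an injective semi-cubical map that preserves the dimension of cubes, so it restricts to a semi-cubical isomorphism $[\bO,\bk]_{(s)}\cong \boxplus^\bk_{(s)}$. Passing to geometric realizations gives a d-homeomorphism that sends $\bO$ to $\bO$ and $\bk$ to $\bI$, hence induces a homeomorphism between the corresponding directed path spaces. I would dispatch this step in one or two sentences.

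Next, the equivalence (b)$\simeq$(c) is an immediate instance of the general chain of equivalences already established in Section 2: since $\boxplus^\bk_{(s)}$ is an $A_\bk$-cubical complex (hence proper), Theorem 2.1 together with Proposition \ref{p:PChIso} gives
\[
\vP(|{\boxplus^\bk_{(s)}}|)_\bO^\bI \simeq |\Ch(\boxplus^\bk_{(s)})_\bO^\bI|\simeq |\cP_{\boxplus^\bk_{(s)}}|.
\]
So this step is also essentially a reference.

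The substantive step, and the one I expect to be the main obstacle, is the equivalence (a)$\simeq$(d). The plan is to adapt the argument from \cite[Section 2.2]{MR} to the $\bk$-weighted setting. Concretely, I would construct a map
\[
\Phi:\vP([\bO,\bk]_{(s)})_\bO^\bk \longrightarrow \Conf_{\bk,s}(\R)
\]
by sending a directed path $\alpha=(\alpha_1,\dots,\alpha_n)$ to the collection $(t_i^j)$ defined (after a mild reparametrization to eliminate plateaus at integer levels) by $t_i^j=\alpha_i^{-1}(j-\tfrac12)$. Monotonicity of each $\alpha_i$ forces $t_i^1<\dots<t_i^{k_i}$, and the hypothesis that the image of $\alpha$ lies in the $s$-skeleton $[\bO,\bk]_{(s)}$ forces at most $s$ of the numbers $t_i^j$ to coincide at any given moment, so the image indeed lies in $\Conf_{\bk,s}(\R)$. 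To show that $\Phi$ is a homotopy equivalence, I would follow MR's outline: construct an explicit inverse (or a deformation retract) by producing, from a configuration $(t_i^j)$, a canonical piecewise-linear directed path whose $i$-th coordinate is the step function with jumps at $t_i^1,\dots,t_i^{k_i}$ smoothed in a uniform way, and then verify that the two composites are homotopic to the identity through directed-path-preserving homotopies. The main technical subtlety — the one I consider the real obstacle — is handling the equivalence classes of directed paths modulo non-decreasing reparametrization and showing the staying-in-the-$s$-skeleton condition is exactly the ``no $(s{+}1)$ coincidences'' condition; this is essentially what MR do for $\bk=(1,\dots,1)$, and the generalization is bookkeeping once one replaces the index set $\{1,\dots,n\}$ by the ordered set $A_\bk$ of ``copies'' from (\ref{e:Ak}).

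Combining the three equivalences yields the proposition, and I would conclude with one sentence noting that (a), (b), (c), (d) are therefore all mutually homotopy equivalent.
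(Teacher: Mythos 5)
Your proposal is correct and follows essentially the same route as the paper: the paper's proof is simply the observation that the argument for Proposition \ref{p:ListSpac} carries over, i.e.\ the identifications of Sections 2--3 give (a)$\simeq$(b)$\simeq$(c) via $i_\bk$ and $|\Ch(\boxplus^\bk_{(s)})_\bO^\bI|\simeq|\cP_{\boxplus^\bk_{(s)}}|$, and the comparison with $\Conf_{\bk,s}(\R)$ is delegated to \cite[Section 2.2]{MR}, exactly as you do. Your extra detail on the MR step (and your flagging of the reparametrization/plateau issue as the real technical content there) is consistent with what that reference handles.
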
 

In terms of PV-programs, the space $\vP([\bO,\bk]_{(s)})_\bO^\bk$ is the execution space of a PV-program with a single resource of capacity $s$ and $n$ processes. The $i$--th process acquires the resource exactly $k_i$ times.

\begin{prp}
	Fix $s>0$. A route $((\ba^j)_{j=1}^{q+1}, (\bb^j)_{j=0}^{q})$ to $\bk$ is a critical route in $[\bO,\bk]_{(s)}$ if and only if $\dim([\ba^j,\bb^j])=s+1$  for $j\in\{1,\dots,q\}$. 
	In particular, the dimension of the critical route equals $q(s-1)$.
\end{prp}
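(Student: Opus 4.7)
The plan is to unpack the conditions (e)--(g) of Definition \ref{d:Route} in the special case $K=[\bO,\bk]_{(s)}$, observing that membership in $K$ is simply controlled by dimension: an elementary cube lies in $K$ iff its dimension is $\leq s$. Each of the three conditions on a critical route then becomes a pure dimension constraint on the cubes $[\ba^j,\bb^j]$.

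First I would check (e): the line $L_{\bb^j,\ba^{j+1}}$ is a union of $1$-dimensional cubes (and $0$-dimensional vertices), so it is automatically contained in $K$ whenever $s\geq 1$. Next, condition (f) says $[\ba^j,\bb^j]\notin K$, which in the skeleton $[\bO,\bk]_{(s)}$ is equivalent to $\dim([\ba^j,\bb^j])\geq s+1$. Finally, condition (g) requires $[\ba^j+\be_{m_j},\bb^j]\in K$; since removing one direction drops dimension by exactly one, this face has dimension $\dim([\ba^j,\bb^j])-1$, so it lies in $K$ iff $\dim([\ba^j,\bb^j])\leq s+1$ (the other face $[\ba^j,\ba^j+\be_{m_j}]$ is $1$-dimensional and therefore automatic). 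Combining (f) and (g) gives exactly $\dim([\ba^j,\bb^j])=s+1$ for every $j\in\{1,\dots,q\}$. Conversely, if this dimension condition holds for every $j$, then (e), (f), (g) are all fulfilled, so the route is critical.

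For the dimension count, I would pass through the correspondence between critical routes and critical sequences established just before Proposition~\ref{p:ExplicitEu}. Under this correspondence, $\bar{E}_j=\dir([\ba^j,\bb^j])$, and by Proposition~\ref{p:barEIsaSet} the multiset $\bar{E}_j$ is actually a set, so $\#E_j=\#\bar{E}_j=\dim([\ba^j,\bb^j])=s+1$. Plugging into the dimension formula of Definition~\ref{d:CrSeq} yields
\[
\dim=\sum_{j=1}^q(\#E_j-2)=\sum_{j=1}^q((s+1)-2)=q(s-1),
\]
as claimed.

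There is no real obstacle here; the only point requiring a moment of care is noticing that condition (g) automatically forces $\dim([\ba^j,\bb^j])\leq s+1$ rather than some weaker inequality, which is what makes the two-sided estimate collapse to an equality together with (f). Everything else is bookkeeping with the skeleton definition.
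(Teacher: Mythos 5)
Your proposal is correct and follows exactly the same route as the paper: condition (e) of Definition \ref{d:Route} is automatic for the $s$-skeleton since $s>0$, and conditions (f) and (g) translate into the lower and upper bounds $\dim([\ba^j,\bb^j])\geq s+1$ and $\dim([\ba^j,\bb^j])\leq s+1$ respectively, forcing equality. The paper states this in one line; your version merely spells out the same bookkeeping, including the dimension count $\sum_j(\#E_j-2)=q(s-1)$ via the route--sequence correspondence.
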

\begin{proof}
	The condition \ref{d:Route}.(e) is satisfied since $s>0$, and the condition $\dim([\ba^j,\bb^j])=s+1$ is equivalent to the conditions \ref{d:Route}.(f)-(g).
\end{proof}

Immediately from Theorem \ref{t:MainEuclidean} follows the analogue of Proposition \ref{p:CnsCritical}.

\begin{prp}
	Let $b_\bk(n,s,q)$ be the number of $q(s-1)$--dimensional routes in $[\bO,\bk]_{(s)}$. Then $\vP([\bO,\bk]_{(s)})_\bO^\bk$ is homotopy equivalent to a CW-complex that has exactly $b_\bk(n,s,q)$ cells of dimension $q(s-1)$ and no cells having dimension non-divisible by $(s-1)$. As a consequence,
	\[
		H_d(\vP(\square^n_{(s)})_\bO^\bI)=\begin{cases}
			\Z^{b_\bk(n,s,q)} & \text{for $d=q(s-1)$,}\\
			0 & \text{otherwise,}
		\end{cases}
	\]
	for $s>2$.
\end{prp}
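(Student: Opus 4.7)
The plan is to assemble the claim from Theorem \ref{t:MainEuclidean} together with the preceding proposition on critical routes, followed by a purely formal observation about the cellular chain complex under the degree-gap hypothesis $s>2$.

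First, I would apply Theorem \ref{t:MainEuclidean} to the Euclidean cubical complex $K=[\bO,\bk]_{(s)}\subseteq[\bO,\bk]$: the space $\vP([\bO,\bk]_{(s)})_\bO^\bk$ is homotopy equivalent to a CW-complex $X_K$ with exactly $\#\Rt^d(K)$ cells of dimension $d$.

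Next, I would translate the count using the preceding proposition. It states that a route $((\ba^j)_{j=1}^{q+1},(\bb^j)_{j=0}^{q})$ to $\bk$ is critical in $[\bO,\bk]_{(s)}$ if and only if $\dim([\ba^j,\bb^j])=s+1$ for each $j\in\{1,\dots,q\}$, and that such a route has dimension $q(s-1)$. Consequently $\#\Rt^d(K)=0$ whenever $d$ is not a multiple of $s-1$, and $\#\Rt^{q(s-1)}(K)=b_\bk(n,s,q)$ by the very definition of $b_\bk(n,s,q)$. This gives the cell-count portion of the statement.

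Finally, for the homology claim under the extra hypothesis $s>2$: the CW-complex $X_K$ has cells only in dimensions belonging to $\{0,\,s-1,\,2(s-1),\,3(s-1),\dots\}$, and since $s-1\geq 2$, no two such dimensions differ by $1$. The cellular chain complex of $X_K$ therefore has trivial boundary maps in every degree, so
\[
    H_d(X_K)\;\cong\;\Z^{\#\Rt^d(K)},
\]
yielding the displayed formula.

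The argument is essentially bookkeeping, and I do not expect a serious obstacle: the real work is hidden in Theorem \ref{t:Main}/Theorem \ref{t:MainEuclidean} (which produces an honest CW-complex, so that cellular homology is available) and in the combinatorial classification of critical routes already carried out in the previous proposition. The degree gap $s-1\geq 2$ then forces the boundary maps to vanish for dimensional reasons, bypassing any delicate incidence-number computations. The only mild caveat is that the case $s=2$ is genuinely excluded here precisely because cells of consecutive dimensions can appear; just as in the discussion following Proposition \ref{p:CnsCritical}, that case would require verifying that the cellular differentials still vanish.
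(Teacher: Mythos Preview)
Your proposal is correct and follows exactly the route the paper takes: it states that the proposition follows immediately from Theorem \ref{t:MainEuclidean} together with the preceding classification of critical routes in $[\bO,\bk]_{(s)}$, and then invokes the same dimension-gap argument used for Proposition \ref{p:CnsCritical} to conclude that the cellular differentials vanish when $s>2$.
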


This recovers results obtained by Meshulam and Raussen in \cite[Section 5.3]{MR}.

\subsection*{Directed path spaces on $K$ for $[\bO,\bk]_{(n-1)}\subseteq K\subseteq [\bO,\bk]$}
This case was considered in \cite{RZ}.

\begin{prp}
	Assume that $n\geq 2$, $\bO\leq \bk\in \Z^n$ and that $K$ is a Euclidean cubical complex such that $[\bO,\bk]_{(n-1)}\subseteq K\subseteq [\bO,\bk]$. Then a route to $\bk$ $((\ba^j)_{j=1}^{q+1}, (\bb^j)_{j=0}^{q})$ is a critical route in $K$ if and only if $[\ba^j,\bb^j]\not\in K$ for $j\in\{1,\dots,n\}$. In particular, this implies that $\ba^j=\bb^j-\bI$ for $j\in\{1,\dots,n\}$ and that the dimension of the critical route equals $q(n-2)$.
\end{prp}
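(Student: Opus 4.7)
The plan is to exploit the hypothesis $[\bO,\bk]_{(n-1)}\subseteq K$ to collapse the three defining conditions (e)--(g) of a critical route (Definition \ref{d:Route}) into the single condition (f). The forward implication then becomes tautological, since (f) is precisely $[\ba^j,\bb^j]\not\in K$.

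For the converse, I would first establish the ``in particular'' clause. Every elementary cube in $[\bO,\bk]$ of dimension at most $n-1$ belongs to the $(n-1)$-skeleton, and hence to $K$; therefore $[\ba^j,\bb^j]\not\in K$ forces $\dim([\ba^j,\bb^j])=|\bb^j-\ba^j|=n$. In view of condition (d) of Definition \ref{d:Route} this is equivalent to $\bb^j-\ba^j=\bI$, i.e., $\ba^j=\bb^j-\bI$. With this in hand, conditions (e) and (g) come for free: the minimal line $L_{\bb^j,\ba^{j+1}}$ consists only of $0$-cubes and $1$-cubes, which lie in $[\bO,\bk]_{(n-1)}\subseteq K$ as soon as $n\geq 2$, giving (e); while for (g) the cube $[\ba^j,\ba^j+\be_{m_j}]$ is $1$-dimensional and $[\ba^j+\be_{m_j},\bb^j]$ is $(n-1)$-dimensional, so both lie in the $(n-1)$-skeleton, hence in $K$.

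For the dimension statement, I would invoke the bijection between critical routes in $K$ and critical sequences in $i_\bk(K)$ constructed just before Proposition \ref{p:ExplicitEu}. Under this bijection $\bar E_j=\dir([\ba^j,\bb^j])$, and Proposition \ref{p:barEIsaSet} ensures that $\bar E_j$ is a set, so $\#E_j=\#\bar E_j=\dim([\ba^j,\bb^j])=n$. The dimension formula from Definition \ref{d:CrSeq} then evaluates to $\sum_{j=1}^q(\#E_j-2)=q(n-2)$.

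There is no genuine obstacle here; the argument is essentially bookkeeping once the skeletal observation is made. The only point demanding care is to verify that the hypothesis $n\geq 2$ is used exactly where it is needed, namely in placing the $1$-cubes of $L_{\bb^j,\ba^{j+1}}$ inside the $(n-1)$-skeleton.
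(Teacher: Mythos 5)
Your proposal is correct and follows the same route as the paper: the hypothesis $[\bO,\bk]_{(n-1)}\subseteq K$ makes conditions (e) and (g) of Definition \ref{d:Route} automatic (the paper dismisses them as ``trivially satisfied'' where you spell out the dimension counts), leaving only (f), and the skeletal observation forces $\bb^j-\ba^j=\bI$ and hence $\#E_j=n$ and dimension $q(n-2)$. Your version is simply a more detailed write-up of the paper's argument.
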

\begin{proof}
	The conditions (e) and (g) in Definition \ref{d:Route} are trivially satisfied so only the condition (f) remains.
\end{proof}

Note that there is 1-1 correspondence between critical routes in $K$ and cube sequences in $K$ defined in \cite[Section 1.4]{RZ}: if $((\ba^j)_{j=1}^{q+1}, (\bb^j)_{j=0}^{q})$ is a critical route in $K$, then $[\bb^1,\dots,\bb^q]$ is a cube sequence and, inversely, a cube sequence $[\bb^1,\dots,\bb^q]$ determines a critical route $((\bb^j-\bI)_{j=1}^{q+1}, (\bb^j)_{j=0}^{q})$ (where $\bb^0=\bO$, $\bb^{q+1}=\bk+\bI$).

Thus, the main theorem of \cite{RZ} (Theorem 1.1) follows immediately from Theorem \ref{t:MainEuclidean} if $n\neq 3$ since there are no critical routes having consecutive dimensions. For $n=3$, the homology calculation requires, as in the previous cases, some additional calculations we do not present here.

\end{document}